\let\pa=\partial
\let\f=\frac
\let\wt=\widetilde
\let\wh=\widehat
\let\D=\Delta
\let\r=\rho
\let\vr=\varrho
\let\lam=\lambda
\def\na{\nabla}
\let\th=\theta
\def\s{\sigma}
\def\e{\epsilon}
\def\al{\alpha}
\def\ve{\varepsilon}
\def\fq{\mathfrak{q}}
\def\cA{{\mathcal A}}
\def\cB{{\mathcal B}}
\def\cC{{\mathcal C}}
\def\cF{{\mathcal F}}
\def\cL{{\mathcal L}}
\def\cM{{\mathcal M}}
\def\cO{{\mathcal O}}
\def\Z{\mathop{\mathbb Z\kern 0pt}\nolimits}
\def\N{\mathop{\mathbb N\kern 0pt}\nolimits}
\def\Q{\mathop{\mathbb Q\kern 0pt}\nolimits}
\def\R{\mathop{\mathbb R\kern 0pt}\nolimits}
\def\Supp{\mathop{\rm Supp}\nolimits\ }
\def\dive{{\mathop{\rm div}\nolimits}\,}
\def\diveh{{\mathop{\rm div}\nolimits}\,}
\def\nablah{\nabla}
\def\Dh{\Delta}
\def\PP{\mathbb{P}}
\def\dj{\Delta_j}
\def\dhk{\Delta_j}
\newcommand{\w}[1]{\langle {#1} \rangle}
\def\alp{\alpha_{\sigma,p}}
\def\atp{\alpha_{2,p}}
\def\dB{\dot{B}}
\newcommand{\andf}{\quad\hbox{and}\quad}
\newcommand{\with}{\quad\hbox{with}\quad}
\def\eqdef{\buildrel\hbox{\footnotesize def}\over =}
 \numberwithin{equation}{section}
\newtheorem{defi}{Definition}[section]
\newtheorem{thm}{Theorem}[section]
\newtheorem{lem}{Lemma}[section]
\newtheorem{rmk}{Remark}[section]
\begin{document}

\title[The optimal time-decay estimates for 2-D inhomogeneous N-S]
{The optimal time-decay estimates for 2-D inhomogeneous Navier-Stokes equations}

\author[Y. Liu]{Yanlin Liu}
\address[Y. Liu]{School of Mathematical Sciences,
Laboratory of Mathematics and Complex Systems,
MOE, Beijing Normal University, 100875 Beijing, China.}
\email{liuyanlin@bnu.edu.cn}

\date{\today}

\begin{abstract}
In this paper, we derive the optimal time-decay estimates for 2-D inhomogeneous Navier-Stokes equations. In particular, we prove that
$\|u(t)\|_{\dB^{\th}_{p,1}(\R^2)}=\cO
(t^{\f1p-\f32-\f{\th}2})$ as $t\rightarrow\infty$ for any $p\in[2,\infty[,~\th\in [0,2]$ if initially $\r_0u_0\in \dB^{-2}_{2,\infty}(\R^2)$.
This is optimal even for the classical homogeneous Navier-Stokes equations. Different with Schonbek and Wiegner's Fourier splitting device, our method here seems more direct, and can adapt to many other equations as well. Moreover, our method allows us to work in the $L^p$-based spaces.
\end{abstract}

\maketitle

\noindent {\sl Keywords:} inhomogeneous Navier-Stokes equations, decay estimate, Littlewood-Paley theory.

\vskip 0.2cm
\noindent {\sl AMS Subject Classification (2000):} 35Q30, 76D03  \

\setcounter{equation}{0}

\section{Introduction}\label{sec1}
In this paper, we investigate the time-decay estimates for the solutions to the
2-D inhomogeneous incompressible Navier-Stokes equations (IN-S for short):
\begin{equation}\label{1.1}
\quad \left\{\begin{array}{l}
\displaystyle \pa_t \rho+u\cdot\na \rho=0,
\qquad\qquad (t,x)\in\R^+\times\R^2,\\
\displaystyle \rho(\pa_t u +u\cdot\nabla u)-\Dh u+\nablah P=0,\\
\displaystyle \diveh u = 0, \\
\displaystyle \rho|_{t=0}=\rho_0,\quad u|_{t=0}=u_0,
\end{array}\right.
\end{equation}
where $\rho$ and $u=(u^1,u^2)$ stand for the density and velocity of the fluid respectively, and $P$ for the scalar pressure function. Such a system can be used to describe a fluid which is incompressible but whose density can be non-constant.
\smallskip

For the special case when $\rho$ is a constant,
without loss of generality, we may assume $\rho=1$, then \eqref{1.1} reduces to be the classical Navier-Stokes equations (N-S for short):
\begin{equation}\label{1.2}
\left\{\begin{array}{l}
\displaystyle \pa_t u+u\cdot\nabla u-\Delta u=-\nabla P,\\
\displaystyle \dive u = 0, \\
\displaystyle u|_{t=0}=u_0.
\end{array}\right.
\end{equation}

Leray proved in the seminal paper \cite{lerayns} that given any finite energy initial data, N-S admits global-in-time weak solutions which verify the energy inequality:
$$\|u\|_{L^\infty_t(L^2(\R^d))}^2+2\|u\|_{L^2_t(L^2(\R^d))}^2
\leq\|u_0\|_{L^2(\R^d)}^2,\quad\forall\ t>0,~d=2\text{ or }3.$$
We mention that for 3-D N-S, the regularity and uniqueness of such weak solutions is still a fundamental open problem in mathematical fluid mechanics. While for the 2-D case, we do have a positive answer to this problem long ago, see for instance \cite{Lady59,LP59,Serrin62}.

Leray \cite{lerayns} also posed the problem of determining whether or not the weak solutions decay to zero in $L^2(\R^d)$ as time tends to infinity. There are numerous works concerning this, here we only list some.
By using Fourier transform and the method of splitting the integration in phase-space into two time-dependent domains,
Schonbek \cite{Sch85} proved that for $u_0\in L^1(\R^3)\cap L^2(\R^3)$, there exists one Leray weak solution\footnote{Actually the solutions she considered are the so-called suitable weak solutions, see \cite{CKN}.} to 3-D N-S such that
$$\|u(t)\|_{L^2(\R^3)}\leq C_{\rm in}(1+t)^{-\f14},$$
where $C_{\rm in}$ denotes some constant depending only on the norms of the initial data.

Motivated by Schonbek's Fourier splitting device,
Wiegner \cite{Wiegner} found that there is a strong connection
between the decay properties of the solutions to N-S and to
the heat equation with the same initial data.
Precisely, he proved that for $u_0\in L^2(\R^d)$
with $\|e^{t\D}u_0\|_{L^2(\R^d)}\leq C(1+t)^{-\f\al2}$
with $0<\al\leq\f{d+2}2$, which is equivalent to say that $u_0\in L^2(\R^d)\cap \dB^{-\al}_{2,\infty}(\R^d)$,
there exists one Leray weak solution such that
$$\|u(t)\|_{L^2(\R^d)}\leq C_{\rm in}(1+t)^{-\f\al2}.$$
In particular, when $\al=\f{d+2}2$ with $d=2$ or $3$, we have
\begin{equation}\label{optimaldecay}
\|u(t)\|_{L^2(\R^2)}\leq C_{\rm in}(1+t)^{-1},\andf
\|u(t)\|_{L^2(\R^3)}\leq C_{\rm in}(1+t)^{-\f54}.
\end{equation}

Later, Miyakawa and Schonbek \cite{MS01, Sch91} also studied the lower bounds of the decay rates, and proved that the decay rates in \eqref{optimaldecay} are optimal.

On the other hand, since the 1980s', a number of works have been dedicated to the study of optimal
time-decay estimates in fluid mechanics, and many of them are not related to
Shonbeck and Wiegner’s method. Here we only list \cite{GW12, MN80, XK15, XK17} concerning the compressible Navier-Stokes equations, or the partially dissipative hyperbolic systems for instance.

\smallskip

The aim of this paper is to find a new approach to
establish these optimal decay estimates,
not only for the classical N-S,
but also for the inhomogeneous one.

Our main result states as follows:

\begin{thm}\label{thm1}
{\sl Let $p\in[2,\infty[,~\sigma\in]0,2]$,
$\rho_0\in\dB^1_{2,1}\cap \dB^{2-\f2p}_{2,1}(\R^2)$
and $u_0\in \dB^0_{2,1}\cap \dB^0_{p,1}(\R^2)$
with $\r_0u_0\in \dB^{-\sigma}_{2,\infty}(\R^2)$
and $\dive u_0=0~$\footnote{See Section \ref{sec2} for the definition of the functional spaces.}.
There exists a small positive constant $\e$ so that if
\begin{equation}\label{small}
\|\rho_0-1\|_{\dB^1_{2,1}(\R^2)}
\exp\bigl(C\|u_0\|_{\dB^0_{2,1}(\R^2)}
\exp(C\|u_0\|_{L^2(\R^2)}^4)\bigr)
\leq \e,
\end{equation}
then the system \eqref{1.1} has a unique global solution
$u\in C([0,\infty[;\dB^0_{2,1}(\R^2))\cap L^1(\R^+; \dB^2_{2,1}(\R^2))$,
which satisfies for any $t>0$ and any $\s\in]0,2[$ that
\begin{equation}\label{SAeq2}
\|u(t)\|_{\dB^0_{2,1}(\R^2)}
\lesssim\alp \langle t\rangle^{-\f \sigma2},
\quad \|u(t)\|_{\dB^{\th}_{p,1}(\R^2)}
\lesssim\alp^6
\langle t\rangle^{\max\{\f1p-\f12-\f\sigma2,-\s\}}t^{-\f{\th}2},
\quad\forall\ \th\in [0,2],
\end{equation}
where $\langle t\rangle$ denotes $1+t$, and for $\s=2$ that
\begin{equation}\label{1.7}
\|u(t)\|_{\dB^0_{2,1}(\R^2)}\lesssim \atp^{12}\w{t}^{-1},
\quad
\|u(t)\|_{\dB^{\th}_{p,1}(\R^2)}\lesssim\atp^{72}
\langle t\rangle^{\f1p-\f32}
t^{-\f{\th}2},\quad\forall\ \th\in [0,2],
\end{equation}
where $\alp\eqdef\|\rho_0u_0\|_{\dB^0_{2,1}\cap \dB^0_{p,1}\cap \dB^{-\sigma}_{2,\infty}(\R^2)}
\exp\bigl(\exp(C\|u_0\|_{\dB^0_{2,1}(\R^2)}^4)\bigr)$
and $\alp^s\eqdef\alp(1+\alp)^{s-1}$.
}\end{thm}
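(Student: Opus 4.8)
The plan is to treat the velocity equation as a perturbation of the heat equation and to run a time-weighted bootstrap built on the smoothing and decay of the heat semigroup in homogeneous Besov spaces. First I would establish the global well-posedness asserted in the statement: applying the Leray projection $\PP$ to the momentum equation in \eqref{1.1} and using $\PP\pa_t u=\pa_t u$ together with $\PP\nabla P=0$, one rewrites the system as the forced heat equation
$$\pa_t u-\Delta u=-\PP(\r u\cdot\nabla u)-\PP\bigl((\r-1)\pa_t u\bigr).$$
The smallness hypothesis \eqref{small}, through a Gronwall argument on the transport equation for $\r-1$, keeps $\|\r-1\|_{\dB^1_{2,1}}$ small for all time; the double-exponential structure of \eqref{small} is precisely what absorbs the growth coming from the $L^2$-energy and from the $\dB^0_{2,1}$-propagation of $u$. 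This yields the announced solution class together with integrable-in-time control of $\pa_t u$.

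For the decay I would use the Duhamel formula $u(t)=e^{t\Delta}u_0+\int_0^t e^{(t-s)\Delta}G(s)\,ds$, with $G$ the right-hand side above. The linear term is governed by the heat-flow estimate $\|e^{t\Delta}f\|_{\dB^0_{2,1}}\lesssim t^{-\f\sigma2}\|f\|_{\dB^{-\sigma}_{2,\infty}}$, which delivers the $\langle t\rangle^{-\sigma/2}$ rate once $u_0$ is compared with $\r_0u_0\in\dB^{-\sigma}_{2,\infty}$ (the difference $(1-\r_0)u_0$ being small and lying, by embedding, in $\dB^{-\sigma}_{2,\infty}$). The essential structural input is that the nonlinearity is a divergence, $u\cdot\nabla u=\dive(u\otimes u)$, so at low frequencies it carries an extra factor $|\xi|$ and its relevant negative-order norm is bounded by $\|u\|_{L^2}^2$; for $\sigma<2$ this makes the Duhamel integral decay strictly faster than the linear part and lets the bootstrap close at the optimal exponent. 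Splitting the time integral at $t/2$, one estimates the piece over $[t/2,t]$ by the already-gained decay of $u$ together with Besov product laws, and the piece over $[0,t/2]$ by loading all the smoothing onto the kernel $e^{(t-s)\Delta}$.

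The upgrade to $\dB^{\th}_{p,1}$ then follows by composing two gains of the heat semigroup: the regularity gain $\|e^{t\Delta}f\|_{\dB^{\th}_{p,1}}\lesssim t^{-\th/2}\|f\|_{\dB^0_{p,1}}$ produces the factor $t^{-\th/2}$, while the combined integrability-regularity estimate $\|e^{t\Delta}f\|_{\dB^0_{p,1}}\lesssim t^{\f1p-\f12-\f\sigma2}\|f\|_{\dB^{-\sigma}_{2,\infty}}$ yields the linear contribution $\langle t\rangle^{\f1p-\f12-\f\sigma2}$. The competing exponent $-\sigma$ inside $\max\{\f1p-\f12-\f\sigma2,-\sigma\}$ comes from the nonlinear Duhamel integral, whose low-frequency part is controlled by $\|u\|_{L^2}^2\sim\langle t\rangle^{-\sigma}$; taking the maximum simply records which of the two mechanisms decays more slowly in the given $L^p$ space. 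The repeated feedback of the nonlinearity through the bootstrap is what produces the high powers $\alp^6$, $\atp^{12}$ and $\atp^{72}$ of the basic constant.

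I expect the main obstacle to be closing the bootstrap at the borderline rate, and in particular the endpoint $\sigma=2$ of \eqref{1.7}. There the linear decay $t^{-1}$ is already the optimal 2-D rate and the nonlinear Duhamel integral is only marginally convergent, so one must track logarithmic losses carefully and exploit the divergence structure quantitatively rather than merely qualitatively—this is presumably the source of the very large exponents in \eqref{1.7}. A secondary difficulty is the density contribution $\PP\bigl((\r-1)\pa_t u\bigr)$: since $\r$ solves a transport equation with no smoothing, its own decay cannot be improved, and one must rely entirely on the smallness of $\r-1$ and on the time-integrability of $\pa_t u$ to show that this term does not degrade the rate.
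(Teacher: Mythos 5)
Your overall scaffolding (Duhamel formula, heat-kernel smoothing, splitting the time integral at $t/2$, bootstrap) matches the paper's, but there is a genuine gap at the first step and it propagates through the whole argument: you never exploit the conservative structure of the momentum equation, i.e.\ you never integrate by parts in time. The paper writes $\rho\pa_tu+\rho u\cdot\na u=\pa_t(\rho u)+\dive(\rho u\otimes u)$, so the perturbation of the heat flow is $-\pa_t(\varrho u)-\dive(\rho u\otimes u)$ with $\varrho=\rho-1$, and then integrates $\int_0^te^{(t-t')\D}\PP\pa_t(\varrho u)\,dt'$ by parts in time. This produces three harmless objects: $\PP(\varrho u)(t)$, absorbed into the left-hand side by smallness of $\varrho$; $\int_0^te^{(t-t')\D}\D\PP(\varrho u)\,dt'$, handled by maximal regularity and smallness; and $-e^{t\D}\PP(\varrho_0u_0)$, which combines with $e^{t\D}u_0$ so that the linear evolution becomes exactly $e^{t\D}\PP(\rho_0u_0)$ --- precisely the object that the hypothesis $\rho_0u_0\in\dB^{-\sigma}_{2,\infty}$ controls.

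Your version instead keeps $\PP(\varrho\,\pa_tu)$ as a source and proposes to handle it by ``smallness of $\varrho$ plus time-integrability of $\pa_tu$''. That cannot produce decay: $\pa_tu$ is only controlled in zero- or positive-regularity spaces, and $\int_0^{t/2}e^{(t-s)\D}\PP(\varrho\,\pa_tu)\,ds$ gains time-decay from the kernel only if the integrand carries negative Besov regularity; with $L^1_t$ information at regularity $0$ the term is bounded but does not decay, so the bootstrap cannot close at any rate. Likewise, your treatment of the linear term by splitting $u_0$ against $\rho_0u_0$ and asserting that $(1-\rho_0)u_0$ lies in $\dB^{-\sigma}_{2,\infty}$ ``by embedding'' is false: homogeneous Besov spaces are not nested ($\dB^0_{2,1}\not\hookrightarrow\dB^{-\sigma}_{2,\infty}$; low frequencies obstruct), and the product laws of Lemma \ref{lemproductlaw} require the sum of the regularity indices to be positive, so $\varrho_0\in\dB^1_{2,1}$ and $u_0\in\dB^0_{2,1}$ give nothing below regularity $0$; in particular for $\sigma\geq1$ (including the optimal case $\sigma=2$) no product estimate can place $\varrho_0u_0$ in $\dB^{-\sigma}_{2,\infty}$. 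Both gaps are cured by the same single missing idea: the time integration by parts that keeps $\rho_0u_0$ intact. Note also that the paper does not run a pointwise-in-time bootstrap: it first proves weighted space-time estimates (Lemma \ref{lemapp}, via an exponential Gronwall-type weight and Bony's decomposition) and then converts them into pointwise decay through the elementary Lemmas \ref{Salem3} and \ref{lemit}, which sidesteps the circularity your scheme would have to address; the endpoint $\sigma=2$ further requires the frequency-by-frequency selection of good times carried out in Section \ref{sec6}, which your sketch does not reach.
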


\begin{rmk}\label{rmk1.1}
By using Schonbek and Wiegner's Fourier splitting device,
Chemin and Zhang \cite{CZ6} also derived the optimal
$L^2(\R^2)$ decay estimates for IN-S \eqref{1.1}.

However, our method here is different.
We use Littlewood-Paley theory and Bony's decomposition
instead of the Plancherel's identity and Fourier splitting device used in \cite{CZ6,Sch85,Wiegner} respectively.
In particular, these release us from working in the $L^2$-based spaces.
Actually our method works for the $L^p$-based spaces as well.
Moreover, the method here seems more direct,
and can be used in the estimates of many other equations as well.
\end{rmk}

\begin{rmk}
It is a long-standing open problem whether 3-D N-S is
globally well-posed or not for large initial data.
Chemin and Gallagher \cite{CG10} considered one enlightening case:
\begin{equation}\label{CGslow}
u_0(x)=(U^1_0 +\ve V^1_0,U^2_0 +\ve V^2_0,V^3_0)(x_1,x_2,\ve x_3),
\with\dive U_0=\dive V_0=0,
\end{equation}
where $\ve$ is small positive constant
depending on some norms of $U_0$ and $V_0$.
The initial data in \eqref{CGslow} is large in
$\dB^{-1}_{\infty,\infty}(\R^3)$,
but still can generate global strong solution to 3-D N-S.
Later, this kind of initial data was studied in a series of works,
see for instance \cite{CGZ, CZ15, LZ4,LPZ}. We mention that,
the main idea in these works is to construct an approximating solution,
which formally satisfies 2-D N-S but with $x_3$ as a parameter.

Motivated by \cite{CG10}, it is natural to consider 3-D IN-S,
with initial velocity in the same form as \eqref{CGslow},
and initial density close to some constant.
In this case, it is reasonable to expect that we can use 2-D IN-S with a parameter $x_3$ as the approximating system.

On the other hand, as we know,
in order to preserve the regularity of the density,
we need $L^1_t({\rm Lip})$ estimate of the velocity.
Hence a comprehensive study on the time-decay estimates for
2-D IN-S could be an important ingredient in the study of
this approximating system, as well as the 3-D IN-S
with initial velocity in the form \eqref{CGslow}.
Moreover, as we have mentioned in Remark \ref{rmk1.1},
here we can also obtain time-decay estimates in the $L^p$-based Besov spaces,
which can be crucial when considering high-oscillating initial data.
\end{rmk}

Let us end this section with some notations that we shall use throughout this paper.

\noindent{\bf Notations:}
We always use $C$ to denote an absolute constant
which may vary from line to line, and $a\lesssim b$ means that $a\leq Cb$.
For a Banach space $B$, we shall use the shorthand $L^p_T(B)$ and $L^p_{T,f}(B)$ for $\bigl\|\|\cdot\|_B\bigr\|_{L^p(0,T;dt)}$
and $\bigl\|\|\cdot\|_B\bigr\|_{L^p(0,T;f(t)dt)}$ respectively.

\setcounter{equation}{0}
\section{Functional spaces and some technical lemmas}\label{sec2}

In this section, we shall introduce the functional spaces used in this paper,
and collect some basic facts on Littlewood-Paley theory.
We would like to emphasize that in the rest of this paper,
all the functional spaces are all defined on $\R^2$ for space variables.

Let us first recall the dyadic operators from \cite{BCD}:
$$\Delta_ja\eqdef \cF^{-1}(\varphi(2^{-j}|\xi|)\widehat{a}),\andf S_ja\eqdef\cF^{-1}(\chi(2^{-j}|\xi|)\widehat{a})$$
where $\chi(\tau)$ and $\varphi(\tau)$ are smooth functions such that
\begin{align*}
&\Supp \varphi \subset \bigl\{\tau \in \R\,: \, 3/4 \leq
|\tau| \leq 8/3 \bigr\}\quad\mbox{and}\quad \forall
 \tau>0\,,\ \sum_{j\in\Z}\varphi(2^{-j}\tau)=1,\\
& \Supp \chi \subset \bigl\{\tau \in \R\,: \, |\tau| \leq
4/3 \bigr\}\quad\mbox{and}\quad \forall
 \tau\in\R\,,\ \chi(\tau)+ \sum_{j\geq 0}\varphi(2^{-j}\tau)=1.
\end{align*}

\begin{defi}\label{defbesov}
{\sl Let $p,r\in[1,\infty]$ and $s\in\R$. The homogeneous Besov space $\dB^s_{p,r}$ consists of those $a\in{\mathcal S}'$ with $\lim_{j\to-\infty}\|S_ja\|_{L^\infty}=0$
such that
$$\|a\|_{\dB^s_{p,r}}\eqdef\big\|\big(2^{js}
\|\Delta_j a\|_{L^p}\big)_{j\in\Z}\bigr\|
_{\ell ^{r}(\Z)}<\infty.$$
And we shall use $\cB^s_p$ to denote $\dB^{s}_{p,1}$ for notation simplification.
}\end{defi}

\begin{defi}
{\sl The Chemin-Lerner type norm (see \cite{CL}) is given by
$$\|a\|_{\wt{L}^q_T(\dB^s_{p,r})}
\eqdef \big\|\big(2^{js}\|\Delta_j a\|
_{L^q_T(L^p)}\big)_{j\in\Z}\bigr\|
_{\ell ^{r}(\Z)}.$$
And for any $f\in L^1_{\rm{loc}}$ with $f\geq 0$,
we recall the following time-weighted norm from \cite{PZ1}:
$$
\|a\|_{\widetilde L^q_{T,f}(\dB^s_{p,r})}
\eqdef\Bigl\|\Bigl( 2^{js}
\bigl(\int_0^T
\|\dj a(t)\|_{L^p}^qf(t)\,dt\bigr)^{\frac1q}\Bigr)
_{j\in\Z}\Bigr\|_{\ell ^{r}(\Z)}.$$
}\end{defi}

The following two Bernstein-type inequalities will play an essential role in this paper.

\begin{lem}[\cite{BCD}]\label{lemBern}
{\sl Let $\cB$ be a ball of $\R^2$, and $\cC$ a ring of $\R^2$. Then for any $1\leq p_2\leq p_1\leq\infty$ and any $m\in\N$, there holds:
\begin{align*}
&\Supp \wh a\subset 2^j\cB
\Rightarrow\|\nabla^m a\|_{L^{p_1}}
\lesssim 2^{j\left(m+2/{p_2}-2/{p_1}\right)}
\|a\|_{L^{p_2}};\\
&\qquad\Supp\wh a\subset 2^j\cC
\Rightarrow\|a\|_{L^{p_1}} \lesssim
2^{-jm}\|\nabla^m a\|_{L^{p_1}}.
\end{align*}
}
\end{lem}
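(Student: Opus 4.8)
The plan is to deduce both inequalities from Young's convolution inequality, using that spectral localization lets one realize $a$, or its derivatives, as a convolution against an explicit kernel obtained by rescaling a single fixed Schwartz function.

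\emph{Ball case.} Fix a smooth, compactly supported $\th$ with $\th\equiv1$ on a neighborhood of $\cB$. Since $\Supp\wh a\subset2^j\cB$, we have $\wh a=\th(2^{-j}\cdot)\wh a$, so $\widehat{\na^m a}(\xi)=(i\xi)^m\th(2^{-j}\xi)\wh a(\xi)$ and therefore $\na^m a=g_j*a$ with $g_j\eqdef\cF^{-1}\bigl((i\xi)^m\th(2^{-j}\xi)\bigr)$. Writing $h\eqdef\cF^{-1}\bigl((i\xi)^m\th(\xi)\bigr)$, which is a fixed Schwartz function independent of $j$, the change of variables $\xi\mapsto2^{-j}\xi$ gives $g_j(x)=2^{j(m+2)}h(2^jx)$. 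I would then apply Young's inequality in the form $\|\na^m a\|_{L^{p_1}}\le\|g_j\|_{L^r}\|a\|_{L^{p_2}}$ with $\f1r=1+\f1{p_1}-\f1{p_2}$, which is admissible precisely because $p_2\le p_1$ forces $r\ge1$. A direct computation gives $\|g_j\|_{L^r}=2^{j(m+2-2/r)}\|h\|_{L^r}$, and the choice of $r$ yields $2-\f2r=\f2{p_2}-\f2{p_1}$, reproducing the exponent $m+\f2{p_2}-\f2{p_1}$ in the statement.

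\emph{Ring case.} Now $\na^m$ must be inverted, which is legitimate because $|\xi|$ stays bounded away from $0$ on $\cC$. The algebraic input is the multinomial identity $|\xi|^{2m}=\sum_{|\alpha|=m}\binom{m}{\alpha}\xi^{2\alpha}$. Fix a smooth $\phi$, supported in a ring not meeting the origin, with $\phi\equiv1$ on $\cC$; then on $\Supp\wh a$ one has $\wh a=|\xi|^{-2m}\phi(2^{-j}\cdot)\sum_{|\alpha|=m}\binom{m}{\alpha}\xi^{2\alpha}\wh a$. Using $\xi^\alpha\wh a=(-i)^m\widehat{\pa^\alpha a}$, and introducing the multiplier $m_\alpha(\xi)\eqdef\xi^\alpha\phi(\xi)/|\xi|^{2m}$, which is smooth and compactly supported so that $g_\alpha\eqdef\cF^{-1}m_\alpha\in L^1$, the scaling relation $m_\alpha(2^{-j}\xi)=2^{jm}\xi^\alpha\phi(2^{-j}\xi)/|\xi|^{2m}$ produces the representation $a=\sum_{|\alpha|=m}c_\alpha\,2^{-jm}\,g_{\alpha,j}*\pa^\alpha a$, where the $c_\alpha$ are constants and $g_{\alpha,j}(x)=2^{2j}g_\alpha(2^jx)$. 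Since $\|g_{\alpha,j}\|_{L^1}=\|g_\alpha\|_{L^1}$ is scale invariant, Young's inequality gives $\|a\|_{L^{p_1}}\lesssim2^{-jm}\sum_{|\alpha|=m}\|\pa^\alpha a\|_{L^{p_1}}\lesssim2^{-jm}\|\na^m a\|_{L^{p_1}}$, which is the second inequality.

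\emph{Main obstacle.} Neither step is deep, since both collapse to Young's inequality once the right convolution representation is in place. The only genuinely non-mechanical point is inverting $\na^m$ in the ring case: the symbol $(i\xi)^m$ vanishes at the origin, so one cannot simply divide by it, and the multinomial identity is exactly what turns the formal division by $(i\xi)^m$ into a bona fide smooth multiplier $\xi^\alpha\phi/|\xi|^{2m}$ supported away from the origin, whose inverse Fourier transform lies in $L^1$. Everything else is careful bookkeeping of the $2^j$-scaling so that the powers of $2^j$ match those asserted.
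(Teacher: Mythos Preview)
Your argument is correct and is essentially the standard proof of Bernstein's inequalities; the paper does not give its own proof but cites \cite{BCD}, where the argument is exactly the one you outline (rescaled multiplier plus Young's convolution inequality). There is nothing to add.
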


\begin{lem}[\cite{Dan01}]\label{lemdan}
{\sl Let $1<p<\infty$ and $\Supp\wh u\in\{\xi\in\R^2:\ R_1<|\xi|<R_2\}$.
 Then there exists a constant $c$ depending only on $R_2/R_1$ such that
$$c\f{R_1^2}{p^2}\int_{\R^2}|u|^p\,dx
\leq\int_{\R^2}|\nabla u|^2|u|^{p-2}\,dx
=-\f{1}{p-1}\int_{\R^2}\D u|u|^{p-2}u\,dx.$$
}\end{lem}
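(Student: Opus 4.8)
The statement has two parts, the integration-by-parts identity and the genuine generalized Bernstein lower bound, and I would treat them separately. For the identity, note first that since $\widehat u$ is supported in the ring $\{R_1<|\xi|<R_2\}$, the function $u$ lies, together with all its derivatives, in every $L^q(\R^2)$, so all the integrations by parts below carry no boundary term. Using the pointwise rule $\na(|u|^{p-2}u)=(p-1)|u|^{p-2}\na u$ (for real-valued $u$; the vector case is handled componentwise), I would integrate by parts to get
\[
-\int_{\R^2}\D u\,|u|^{p-2}u\,dx=\int_{\R^2}\na u\cdot\na\bigl(|u|^{p-2}u\bigr)\,dx=(p-1)\int_{\R^2}|\na u|^2|u|^{p-2}\,dx ,
\]
which is the claimed equality after dividing by $p-1$. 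When $p<2$ one first runs this with the regularised weight $(|u|^2+\delta)^{\f{p-2}2}$ and lets $\delta\to0$.

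For the inequality, the \emph{obvious} route is to set $v\eqdef|u|^{p/2}$, observe that $\int_{\R^2}|u|^p\,dx=\|v\|_{L^2}^2$ and $\f{p^2}4\int_{\R^2}|\na u|^2|u|^{p-2}\,dx=\|\na v\|_{L^2}^2$, and try a Poincaré bound $R_1^2\|v\|_{L^2}^2\lesssim\|\na v\|_{L^2}^2$. This fails, because $v=|u|^{p/2}\geq0$ has $\widehat v(0)\neq0$ and so does \emph{not} inherit the spectral gap of $u$. The point is instead to keep $u$ itself, which is localised, and transfer the gap through a first-order representation. Fix a profile $\Phi\in C_c^\infty$ with $\Phi\equiv1$ on $\{1\leq|\eta|\leq R_2/R_1\}$ and $\Supp\Phi\subset\{1/2<|\eta|<2R_2/R_1\}$, and set $m_k(\xi)\eqdef-i\xi_k\Phi(\xi/R_1)/|\xi|^2$, $g_k\eqdef\cF^{-1}m_k$ for $k=1,2$. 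Since $\sum_k i\xi_k m_k(\xi)=\Phi(\xi/R_1)\equiv1$ on $\Supp\widehat u$, one has $u=\sum_{k}\pa_k(g_k*u)$, and a scaling computation gives $\|g_k\|_{L^1}\leq C(R_2/R_1)\,R_1^{-1}$.

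Writing $w_k\eqdef g_k*u$ and integrating by parts once more,
\[
\int_{\R^2}|u|^p\,dx=\int_{\R^2}u\,|u|^{p-2}u\,dx=-(p-1)\sum_{k=1,2}\int_{\R^2}w_k\,|u|^{p-2}\pa_k u\,dx .
\]
Splitting $|u|^{p-2}\pa_k u=|u|^{\f{p-2}2}\cdot\bigl(|u|^{\f{p-2}2}\pa_k u\bigr)$ and applying Cauchy--Schwarz gives
\[
\int_{\R^2}|u|^p\,dx\leq(p-1)\sum_{k}\Bigl(\int_{\R^2}|w_k|^2|u|^{p-2}\,dx\Bigr)^{\f12}\Bigl(\int_{\R^2}|u|^{p-2}|\pa_k u|^2\,dx\Bigr)^{\f12}.
\]
For $p\geq2$, Hölder with the conjugate exponents $\f p2$ and $\f p{p-2}$ bounds the first factor by $\|w_k\|_{L^p}\|u\|_{L^p}^{\f{p-2}2}$, and Young's convolution inequality gives $\|w_k\|_{L^p}\leq\|g_k\|_{L^1}\|u\|_{L^p}\lesssim R_1^{-1}\|u\|_{L^p}$, so that the first factor is $\lesssim R_1^{-1}\|u\|_{L^p}^{p/2}$. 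Substituting, summing over $k$, cancelling one power of $\|u\|_{L^p}^{p/2}$ and squaring yields
\[
\int_{\R^2}|u|^p\,dx\lesssim\f{p^2}{R_1^2}\int_{\R^2}|\na u|^2|u|^{p-2}\,dx ,
\]
with a constant depending only on $R_2/R_1$; the sole $p$-dependence enters through $(p-1)^2\leq p^2$, matching the factor $p^2$ in the statement.

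The main obstacle is the clean Hölder step, which requires $p\geq2$ (so that $\f p{p-2}\geq1$) — exactly the range used in Theorem \ref{thm1}. For $1<p<2$ the weight $|u|^{p-2}$ is singular at the zeros of $u$ and this pairing degenerates, so controlling $\int_{\R^2}|w_k|^2|u|^{p-2}\,dx$ then demands the finer analysis of \cite{Dan01}, exploiting that $w_k=g_k*u$ is itself spectrally localised (replacing the naive Hölder bound by a maximal-function/regularisation argument). I expect this singular-weight regime to be the only genuinely delicate point; everything else is bookkeeping of the $R_1,R_2$ scaling and of the $p$-dependence.
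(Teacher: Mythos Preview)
The paper does not prove this lemma; it is quoted from Danchin \cite{Dan01}, so there is no in-paper argument to compare against. Your proof for $p\geq 2$ is correct and is in fact the standard route (and essentially Danchin's): writing $u=\sum_k\pa_k(g_k*u)$ via a homogeneous multiplier of degree $-1$ localised to the annulus, then pairing against $|u|^{p-2}u$ and closing with Cauchy--Schwarz and H\"older. Since Theorem \ref{thm1} and the only place the lemma is invoked (Appendix \ref{appA}) both restrict to $p\in[2,\infty[$, your argument already covers everything the paper needs, and your honest deferral of the range $1<p<2$ to \cite{Dan01} is appropriate.

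One small caveat on the identity: your pointwise rule $\na(|u|^{p-2}u)=(p-1)|u|^{p-2}\na u$ is correct for \emph{scalar} $u$, but ``componentwise'' does not literally reproduce the stated equality for vector fields. If $u=(u^1,u^2)$ one finds instead
\[
-\int_{\R^2}\D u\cdot|u|^{p-2}u\,dx
=\int_{\R^2}|u|^{p-2}|\na u|^2\,dx
+(p-2)\int_{\R^2}|u|^{p-4}\bigl|\na\tfrac{|u|^2}{2}\bigr|^2\,dx ,
\]
so the equality in the lemma is a scalar statement. This does not affect the paper's use of the lemma, which only exploits the \emph{lower bound} $-\int\D u\cdot|u|^{p-2}u\,dx\gtrsim 2^{2j}\|u\|_{L^p}^p$ for $u=\dj w_\mu$; for $p\geq 2$ the extra term above is nonnegative, and your inequality argument (which never used the exact identity) goes through unchanged in the vector case.
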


The following result concerns the action of smooth functions on Besov spaces.
\begin{lem}[\cite{Meyer}]\label{comBesov}
{\sl Let $s>0$ and $p,~r\in[1,\infty]$. Let $F\in W^{[s]+2,\infty}_{\rm{loc}}$
such that $F(0)=0$. Then there exists some constant $c$ depending only on $s,~F$
and $\|u\|_{L^\infty}$ such that
$$\|F\circ u\|_{\dB^s_{p,r}}
\leq c\|u\|_{\dB^s_{p,r}}.$$
}\end{lem}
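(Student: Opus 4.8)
The plan is to reconstruct $F\circ u$ from its dyadic increments and then recover its Besov norm from an $L^p$ bound and a gradient bound on those increments. Since membership in $\dB^s_{p,r}$ forces $\lim_{j\to-\infty}\|S_ju\|_{L^\infty}=0$ and $F$ is continuous with $F(0)=0$, I would start from the telescoping identity
\[
F\circ u=\sum_{j\in\Z}f_j,\qquad f_j\eqdef F(S_{j+1}u)-F(S_ju)=m_j\,\dj u,\qquad m_j\eqdef\int_0^1 F'\bigl(S_ju+\tau\,\dj u\bigr)\,d\tau,
\]
the last equality being the mean-value form of the increment, using $S_{j+1}u-S_ju=\dj u$. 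Because $u\in L^\infty$, every argument $S_ju+\tau\,\dj u$ takes values in a fixed bounded interval $I_0\subset\R$ of length $\lesssim\|u\|_{L^\infty}$, on which $F,F',\dots,F^{([s]+2)}$ are bounded; this is the sole source of the claimed dependence of the constant on $\|u\|_{L^\infty}$ and on $F$.

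Next I would record two estimates for the blocks $f_j$, abbreviating $c_j\eqdef 2^{js}\|\dj u\|_{L^p}$ so that $\|(c_j)_{j}\|_{\ell^r}=\|u\|_{\dB^s_{p,r}}$. The $L^p$ estimate is immediate from $\|m_j\|_{L^\infty}\le\|F'\|_{L^\infty(I_0)}$, giving $\|f_j\|_{L^p}\le\|F'\|_{L^\infty(I_0)}\,2^{-js}c_j$. The essential companion is a smoothness bound of the form $\|\na^M f_j\|_{L^p}\lesssim 2^{j(M-s)}e_j$ with $M\eqdef[s]+1>s$ and $(e_j)\in\ell^r$ of norm $\lesssim\|u\|_{\dB^s_{p,r}}$. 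Granting these, the conclusion follows by a reconstruction argument: applying $\Delta_{j'}$ to the series and splitting the sum at $j=j'$, I would bound the low part ($j\ge j'$) by the $L^p$ estimate and the high part ($j<j'$) by Bernstein's second inequality (Lemma \ref{lemBern}), $\|\Delta_{j'}f_j\|_{L^p}\lesssim 2^{-j'M}\|\na^M f_j\|_{L^p}$, to obtain
\[
2^{j's}\|\Delta_{j'}(F\circ u)\|_{L^p}\lesssim\sum_{j\ge j'}2^{(j'-j)s}\,c_j+\sum_{j<j'}2^{(j'-j)(s-M)}\,e_j.
\]
Since $s>0$ and $M-s>0$, both kernels decay geometrically, so Young's inequality for $\ell^r$-convolutions yields $\|F\circ u\|_{\dB^s_{p,r}}\lesssim\|u\|_{\dB^s_{p,r}}$ with the announced constant.

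The main obstacle is the gradient bound $\|\na^M f_j\|_{L^p}\lesssim 2^{j(M-s)}e_j$. Expanding $\na^M f_j$ by the Faà di Bruno formula produces, besides the top-order term $F'(\cdots)\,\na^M S_{j+1}u$ (for which $\|\na^M S_{j+1}u\|_{L^p}\lesssim\sum_{j''\le j}2^{j''(M-s)}c_{j''}\lesssim 2^{j(M-s)}e_j$ by Bernstein and geometric summation, using $M-s>0$), a family of lower-order terms that are products of several derivatives $\na^{k_i}S_{j+1}u$ weighted by derivatives of $F$. Controlling these requires placing all but one factor in $L^\infty$ through $\|\na^k S_ju\|_{L^\infty}\lesssim\sum_{j''\le j}2^{j''(k+2/p)}\|\Delta_{j''}u\|_{L^p}$ (Bernstein in $\R^2$) and absorbing them into $\|u\|_{L^\infty}$-dependent constants by interpolation, while the mismatch between the $S_{j+1}$- and $S_j$-arguments costs one further derivative of $F$ via an additional mean-value step; this is precisely why $F\in W^{[s]+2,\infty}_{\rm loc}$ is imposed. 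The careful bookkeeping guaranteeing that each term collapses to $2^{j(M-s)}\times(\ell^r\text{ sequence})$ with a constant depending only on $s$, $F$ and $\|u\|_{L^\infty}$ is the technical heart of the argument.

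Finally, I would remark that for the range $0<s<1$ this bookkeeping is unnecessary: one may take $M=1$ and estimate $\na f_j=F'(S_{j+1}u)\na S_{j+1}u-F'(S_ju)\na S_ju$ directly, or, even more simply, invoke the finite-difference characterization of $\dB^s_{p,r}$ together with the Lipschitz bound $|F(u(x+h))-F(u(x))|\le\|F'\|_{L^\infty(I_0)}\,|u(x+h)-u(x)|$, which yields $\|F\circ u\|_{\dB^s_{p,r}}\le\|F'\|_{L^\infty(I_0)}\|u\|_{\dB^s_{p,r}}$ in one line. This special case both motivates the general scheme and serves as a consistency check on the constants.
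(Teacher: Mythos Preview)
The paper does not supply a proof of this lemma; it is quoted from \cite{Meyer} without argument. Your outline is precisely the standard proof found there (and reproduced, for instance, as Theorem~2.87 in \cite{BCD}): Meyer's first linearization $F\circ u=\sum_j m_j\,\dj u$, an $L^p$ bound and an $M$-th derivative bound on each block with $M=[s]+1$, and reconstruction of the $\dB^s_{p,r}$ norm via the two geometric kernels and Young's convolution inequality in $\ell^r$.

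One simplification for the gradient step: rather than applying Fa\`a di Bruno to $F(S_{j+1}u)$ and $F(S_ju)$ separately and then reconciling the mismatch, keep the product form $f_j=m_j\,\dj u$ and use Leibniz. Bernstein on the ball of radius $2^{j+1}$, staying in $L^\infty$, gives $\|\na^\ell S_ju\|_{L^\infty}\lesssim 2^{j\ell}\|u\|_{L^\infty}$ and $\|\na^\ell\dj u\|_{L^\infty}\lesssim 2^{j\ell}\|u\|_{L^\infty}$ directly (no passage through $L^p$ and no sum over $j''\le j$ is needed here). Fa\`a di Bruno applied to $m_j$ then yields $\|\na^k m_j\|_{L^\infty}\lesssim_{F,\|u\|_{L^\infty}} 2^{jk}$ for every $0\le k\le M$, and hence
\[
\|\na^M f_j\|_{L^p}\le\sum_{k=0}^M\binom{M}{k}\|\na^k m_j\|_{L^\infty}\|\na^{M-k}\dj u\|_{L^p}\lesssim 2^{jM}\|\dj u\|_{L^p}=2^{j(M-s)}c_j,
\]
so one may simply take $e_j=c_j$. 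The regularity count is unchanged ($M$ derivatives land on $m_j$, which already carries $F'$, giving $[s]+2$ derivatives of $F$), but the bookkeeping you flag as ``the technical heart'' collapses to a single line.
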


And the following product law will be frequently used in this paper.

\begin{lem}[\cite{BCD}]\label{lemproductlaw}
{\sl For any $p\in[2,\infty[$, any $s_1,s_2\in]-2/p,2/p[$ with $s_1+s_2>0$, and any $r,r_1,r_2\in[1,\infty]$
with $1/r=1/r_1+1/r_2$, there holds
$$\|ab\|_{\dB^{s_1+s_2-\f2p}_{p,r}}
\lesssim\|a\|_{\dB^{s_1}_{p,r_1}}
\|b\|_{\dB^{s_2}_{p,r_2}},$$
and for the borderline case that
$$\|ab\|_{\dB^{s_1}_{p,r}}
\lesssim\|a\|_{\dB^{s_1}_{p,r}}
\|b\|_{\cB^{\f2p}_{p}},\andf
\|ab\|_{\dB^{-\f2p}_{p,\infty}}
\lesssim\|a\|_{\dB^{-\f2p}_{p,\infty}}
\|b\|_{\cB^{\f2p}_{p}}.$$
}\end{lem}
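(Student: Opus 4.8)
The plan is to establish every inequality in the statement at once through Bony's decomposition, writing
\[
ab = T_ab + T_ba + R(a,b),\qquad T_ab\eqdef\sum_{j}S_{j-1}a\,\dj b,\quad R(a,b)\eqdef\sum_{j}\dj a\,\wt\Delta_j b,
\]
with $\wt\Delta_j\eqdef\Delta_{j-1}+\Delta_j+\Delta_{j+1}$, and bounding the three pieces separately. The two structural facts I would use throughout are spectral localization—$S_{j-1}a\,\dj b$ is supported in a fixed annulus $2^j\cC$, while $\dj a\,\wt\Delta_j b$ is only supported in a fixed ball $2^j\cB$—and the Bernstein inequalities of Lemma \ref{lemBern}, which trade Lebesgue exponents on dyadically localized functions. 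Concretely I will repeatedly invoke $\|\Delta_{j'}a\|_{L^\infty}\lesssim 2^{2j'/p}\|\Delta_{j'}a\|_{L^p}$ and, since $p/2\ge1$ for $p\ge2$, the low-to-high exchange $\|\Delta_k(fg)\|_{L^p}\lesssim 2^{2k/p}\|fg\|_{L^{p/2}}$ whenever $fg$ is localized near $2^k$.

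For $T_ab$ the annular support forces only $|j-k|\le N_0$ to contribute to $\Delta_k(T_ab)$. Summing Bernstein over low frequencies gives $\|S_{j-1}a\|_{L^\infty}\lesssim\sum_{j'\le j-2}2^{j'(2/p-s_1)}\bigl(2^{j's_1}\|\Delta_{j'}a\|_{L^p}\bigr)$, and because $s_1<2/p$ the weight $2^{j'(2/p-s_1)}$ is summable towards $-\infty$; this identifies the sum as a convolution with an $\ell^1$ sequence, hence it is $\lesssim 2^{j(2/p-s_1)}\beta_j$ with $(\beta_j)\in\ell^{r_1}$ of norm $\lesssim\|a\|_{\dB^{s_1}_{p,r_1}}$. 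The target weight then becomes $2^{k(s_1+s_2-2/p)}\|S_{j-1}a\|_{L^\infty}\|\dj b\|_{L^p}\lesssim\beta_j\bigl(2^{js_2}\|\dj b\|_{L^p}\bigr)$ for $j\sim k$, and a discrete Hölder inequality with $1/r=1/r_1+1/r_2$ closes the estimate by $\|a\|_{\dB^{s_1}_{p,r_1}}\|b\|_{\dB^{s_2}_{p,r_2}}$. The term $T_ba$ is treated identically with $(s_1,r_1)$ and $(s_2,r_2)$ exchanged, which is where $s_2<2/p$ enters.

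The remainder is the delicate piece. Since $\dj a\,\wt\Delta_j b$ is only ball-supported, $\Delta_k R(a,b)$ collects all $j\ge k-N_0$; estimating the product in $L^{p/2}$ and returning to $L^p$ by Bernstein produces the factor $2^{2k/p}$, which exactly cancels the $-2/p$ shift in the target index and leaves
\[
2^{k(s_1+s_2-2/p)}\|\Delta_k R(a,b)\|_{L^p}\lesssim\sum_{j\ge k-N_0}2^{(k-j)(s_1+s_2)}\bigl(2^{js_1}\|\dj a\|_{L^p}\bigr)\bigl(2^{js_2}\|\wt\Delta_j b\|_{L^p}\bigr).
\]
The summability in $j$ holds precisely because $s_1+s_2>0$, after which Young and Hölder give the bound. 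I expect this remainder, and in particular the borderline value $s_1+s_2=0$, to be the main obstacle.

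For the two borderline estimates I set $s_2=2/p$, $r_2=1$ and exploit $\cB^{2/p}_p\hookrightarrow L^\infty$, so that $\|S_{j-1}b\|_{L^\infty}\lesssim\|b\|_{\cB^{2/p}_p}$ uniformly in $j$; this makes $T_ba$ immediate in either target, while $T_ab$ goes through as above. In the first case the remainder also goes through, the hypothesis $s_1>-2/p$ being exactly what keeps $s_1+s_2>0$. The second estimate, into $\dB^{-2/p}_{p,\infty}$, is the genuine endpoint: now $s_1+s_2=0$, so the geometric summation that saved the remainder is unavailable, and moreover $1/r=0\neq1/r_1+1/r_2$. The remedy is to measure the output in $\ell^\infty$ over $k$ and to use the $\ell^1$-summability of the dyadic blocks of $b$ carried by $r_2=1$: the remainder reduces to $\sup_k\sum_{j\ge k-N_0}2^{2j/p}\|\wt\Delta_j b\|_{L^p}\lesssim\|b\|_{\cB^{2/p}_p}$, uniformly in $k$, while $\|\dj a\|_{L^p}\le 2^{2j/p}\|a\|_{\dB^{-2/p}_{p,\infty}}$ supplies the matching $a$-factor. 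This combination of the supremum in $k$ with the $\ell^1$ summation in $b$ is exactly what reconciles the failure of the Hölder index relation at this endpoint.
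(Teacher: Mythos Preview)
The paper does not supply a proof of this lemma at all; it is quoted as a known result from \cite{BCD}. Your argument via Bony's decomposition $ab=T_ab+T_ba+R(a,b)$, with Bernstein's inequalities handling the paraproducts under the constraints $s_1,s_2<2/p$ and the remainder under $s_1+s_2>0$ (and the endpoint salvage through the $\ell^\infty$ output combined with the $\ell^1$ summability of $b$ in $\cB^{2/p}_p$), is precisely the standard proof one finds in that reference, and it is correct.
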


The following lemma concerns a general system,
whose proof will be put in Appendix \ref{appA}.

\begin{lem}\label{lem4.1}
{\sl Let  $b_1,b_2,\eta:[0,T]\times\R^2\rightarrow\R^2$ be smooth vector fields
with $\diveh b_1=\diveh b_2=0.$ Then under the assumption that
$\|a\|_{{L}^\infty_t(\cB^1_{2})}$ is sufficiently small,
for any $p\in[2,\infty[$ and any $s\in ]-2/p, 2/p],$ the following system
\begin{equation}\label{eqt2dw}
\left\{\begin{array}{l}
\displaystyle \pa_t w +b_1\cdot\nablah w
+w\cdot\nablah b_2-(1+a)(\Dh w-\nablah\Pi)=\eta,\\
\displaystyle \diveh w = 0, \\
\displaystyle w|_{t=0}=w_0,
\end{array}\right.
\end{equation}
has a unique solution so that for any $t\leq T$, there holds
\begin{equation}\begin{split}\label{lem4.1a}
\|w\|_{\wt{L}^\infty_t(\cB^s_{p})}+\|\Dh w\|_{L^1_t(\cB^s_{p})}
\leq&C\bigl(\|w_0\|_{\cB^s_{p}}
+\|\eta\|_{L^1_t(\cB^s_{p})}\bigr)\\
&\times\exp\bigl(C\|b_1\|_{L^2_t(\cB^{\f2p}_{p})}^2
+C\|\nablah b_2\|_{L^1_t(\cB^{\f2p}_{p})}\bigr).
\end{split}\end{equation}
Furthermore, if $b_2=\eta=0$ and $s\in ]-2/p,1/p]$, there holds
\begin{equation}\begin{split}\label{lem4.1b}
\|w\|_{\wt{L}^\infty_t(\cB^s_{p})}+\|\Dh w\|_{L^1_t(\cB^s_{p})}
\leq C\|w_0\|_{\cB^s_{p}}
\exp\bigl(C\|b_1\|_{L^\infty_t(L^2)}^{\f2{p-1}}
\|\nablah b_1\|_{L^2_t(L^2)}^2\bigr).
\end{split}\end{equation}
}\end{lem}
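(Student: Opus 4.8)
The plan is to regard \eqref{eqt2dw} as a linear transport--diffusion system with variable coefficient $1+a$, and to prove both inequalities by a frequency-localized $L^p$ energy argument. Existence and uniqueness are not the main point: for the linear system they follow from the a priori bounds below together with a routine Friedrichs regularization, while uniqueness follows from the same energy estimate applied to the difference of two solutions (which solves \eqref{eqt2dw} with $w_0=\eta=0$). So I concentrate on \eqref{lem4.1a} and \eqref{lem4.1b}. Applying $\Delta_j$ to the momentum equation, I set $w_j\eqdef\Delta_j w$ and test the localized equation against $|w_j|^{p-2}w_j$. The decisive gain comes from the dissipation: $-\int_{\R^2}\Delta w_j\,|w_j|^{p-2}w_j\,dx\gtrsim 2^{2j}\|w_j\|_{L^p}^p$ by Lemma \ref{lemdan}, so after dividing by $\|w_j\|_{L^p}^{p-1}$ I obtain a differential inequality of the form $\frac{d}{dt}\|w_j\|_{L^p}+c\,2^{2j}\|w_j\|_{L^p}\le(\mathrm{RHS})_j$.

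In $(\mathrm{RHS})_j$ the divergence-free hypothesis is used twice. First, $\diveh b_1=0$ gives $\int_{\R^2}(b_1\cdot\nablah w_j)\,|w_j|^{p-2}w_j\,dx=0$, so only the commutator $[\Delta_j,b_1\cdot\nablah]w$ survives the transport term. Writing $b_1\cdot\nablah w=\diveh(b_1\otimes w)$, this commutator carries the derivative outside and is bounded by $2^{j}\|b_1\|_{\cB^{2/p}_p}$ times a frequency-localized $L^p$ norm of $w$; the crucial step is then a Young inequality $2^{j}\|b_1\|_{\cB^{2/p}_p}\|w_j\|_{L^p}\le\frac{c}{2}2^{2j}\|w_j\|_{L^p}+C\|b_1\|_{\cB^{2/p}_p}^2\|w_j\|_{L^p}$ that trades the single derivative $2^j$ against the dissipation $2^{2j}$, and the leftover weight $\|b_1\|_{\cB^{2/p}_p}^2$ integrates in time to the factor $\exp(C\|b_1\|_{L^2_t(\cB^{2/p}_p)}^2)$. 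This is exactly why the drift appears squared and in $L^2_t$, rather than as the naive $\exp(C\|\nablah b_1\|_{L^1_t(\cB^{2/p}_p)})$. The term $w\cdot\nablah b_2$ needs no cancellation: the borderline product law of Lemma \ref{lemproductlaw} gives $\|w\cdot\nablah b_2\|_{\cB^s_p}\lesssim\|w\|_{\cB^s_p}\|\nablah b_2\|_{\cB^{2/p}_p}$ for $s\in\,]-2/p,2/p]$, contributing the factor $\exp(C\|\nablah b_2\|_{L^1_t(\cB^{2/p}_p)})$, while $\eta$ enters as the source $\|\eta\|_{L^1_t(\cB^s_p)}$. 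The variable-coefficient and pressure terms are the last ingredients: writing $\Delta_j((1+a)\Delta w)=(1+a)\Delta w_j+[\Delta_j,a]\Delta w$, both $a\Delta w_j$ and, after Bony's decomposition, the commutator $[\Delta_j,a]\Delta w$ are of the same order $2^{2j}$ as the dissipation but carry the small factor $\|a\|_{L^\infty_t(\cB^1_2)}$ (using $\cB^1_2\hookrightarrow\cB^{2/p}_p$), and are therefore absorbed into the left-hand side; the pressure, fixed by the elliptic relation $\diveh((1+a)\nablah\Pi)=\diveh(\eta-b_1\cdot\nablah w-w\cdot\nablah b_2+(1+a)\Delta w)$, is controlled in the same localized $L^p$ framework as in \cite{Dan01} using the smallness of $a$. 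Multiplying by $2^{js}$, summing over $j\in\Z$ in the Chemin--Lerner norm and invoking Gronwall then yields \eqref{lem4.1a}.

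For \eqref{lem4.1b}, where $b_2=\eta=0$, the scheme is unchanged except in the treatment of the surviving transport commutator, which is now estimated through the energy norms of $b_1$ rather than $\|b_1\|_{\cB^{2/p}_p}$. Here I would bound the commutator by $\|\nablah b_1\|_{L^2}$ together with a two-dimensional Gagliardo--Nirenberg interpolation that supplies the extra power of $\|b_1\|_{L^2}$; the Young inequality absorbing the $2^{2j}$-part into the dissipation then replaces the Gronwall weight $\|b_1\|_{\cB^{2/p}_p}^2$ by $\|b_1\|_{L^2}^{2/(p-1)}\|\nablah b_1\|_{L^2}^2$, and integrating in time while pulling out $\|b_1\|_{L^\infty_t(L^2)}$ gives the factor $\exp(C\|b_1\|_{L^\infty_t(L^2)}^{2/(p-1)}\|\nablah b_1\|_{L^2_t(L^2)}^2)$. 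The exponent $2/(p-1)$ and the stronger restriction $s\in\,]-2/p,1/p]$ are precisely the range of validity of this interpolation--commutator estimate.

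The main obstacle is not the transport part but the interplay of the variable-coefficient diffusion $(1+a)\Delta w$ with the coupled pressure: both are genuinely second order in $w$, so neither can be demoted to a lower-order source and both must instead be absorbed into the dissipation, which is possible only thanks to the smallness of $\|a\|_{L^\infty_t(\cB^1_2)}$ and the sharp frequency-localized $L^p$ dissipation of Lemma \ref{lemdan}. The secondary, and more delicate, point is the bookkeeping that turns pointwise-in-time transport bounds into the quadratic $L^2_t$ weights via Young's inequality, and, for \eqref{lem4.1b}, the precise choice of interpolation exponents, which is what forces $s\le 1/p$.
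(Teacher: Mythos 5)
Your proposal is correct and is essentially the paper's own proof: a $\Delta_j$-localized $L^p$ energy estimate in which Lemma \ref{lemdan} supplies the dissipation $2^{2j}\|\Delta_j w\|_{L^p}^p$, the product laws of Lemma \ref{lemproductlaw} together with the smallness of $a$ absorb the variable-coefficient diffusion and the pressure (the paper's Lemma \ref{lemp}), and the same Young-inequality trade of $2^{j}$ against $2^{2j}$ produces the squared $L^2_t$ weight in \eqref{lem4.1a} and, via $\|b_1\|_{\cB^{1/p}_p}\lesssim\|b_1\|_{L^2}^{1/p}\|\nablah b_1\|_{L^2}^{1-1/p}$ and interpolation on $w$, the weight $\|b_1\|_{L^2}^{2/(p-1)}\|\nablah b_1\|_{L^2}^{2}$ in \eqref{lem4.1b}. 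The only (interchangeable) differences are bookkeeping: the paper never uses the transport cancellation or commutators, estimating $b_1\cdot\nablah w$ directly as a forcing term by the product law plus interpolation, and it closes the estimate with an exponential time-weight $w_\mu$ in the Chemin--Lerner framework rather than your Gronwall step.
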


At the end of this section, let us present
two useful lemmas in deriving time-decay estimates.

\begin{lem}\label{Salem3}
{\sl Let $q\in[1,\infty[$ and $f(t)\in C[0,\infty[$ satisfy
$0\leq f(t)\leq C_0$ and $\|f\|_{L^{q}(0,t)}\leq C_1$ for any $t\in\R^+,$
and $f(t)\leq C_2f(\tau)+C_1C_2 \tau^{-\f1q}$
for any $t\geq \tau\geq 0.$ Then one has
\begin{equation}\label{decayft}
f(t)\leq \max\left\{2C_0,8C_1C_2\right\}\langle t\rangle^{-\f1q}.
\end{equation}
}\end{lem}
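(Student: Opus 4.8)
The plan is to exploit the self-improving inequality $f(t)\le C_2 f(\tau)+C_1C_2\tau^{-\f1q}$ not at a single value of $\tau$, but averaged over a whole interval. The point is that we only control $f$ through its $L^q$-norm in time, not pointwise, so plugging in one particular $\tau$ cannot by itself produce decay. Instead, for a fixed $t>0$ I would integrate the inequality in $\tau$ over the interval $[t/2,t]$, whose length $t/2$ is comparable to $t$. Since $f(t)$ does not depend on $\tau$, averaging the left-hand side is harmless:
\[
f(t)=\f2t\int_{t/2}^t f(t)\,d\tau\le\f2t\int_{t/2}^t\bigl(C_2 f(\tau)+C_1C_2\tau^{-\f1q}\bigr)\,d\tau .
\]

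For the first term I would apply H\"older's inequality (with conjugate exponent $q'$) together with the hypothesis $\|f\|_{L^q(0,t)}\le C_1$, giving
\[
\int_{t/2}^t f(\tau)\,d\tau\le\|f\|_{L^q(0,t)}\,(t/2)^{1-\f1q}\le C_1(t/2)^{1-\f1q},
\]
which is valid for every $q\in[1,\infty[$, including $q=1$ where the exponent $1-\f1q$ vanishes. For the second term I would simply use that $\tau\mapsto\tau^{-\f1q}$ is decreasing, so that $\int_{t/2}^t\tau^{-\f1q}\,d\tau\le(t/2)^{-\f1q}\cdot(t/2)=(t/2)^{1-\f1q}$. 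Since $1\le q<\infty$ we have $2^{\f1q}\le2$, and after simplification both contributions reduce to $2C_1C_2\,t^{-\f1q}$. Adding them yields the clean pointwise bound $f(t)\le 4C_1C_2\,t^{-\f1q}$, valid for every $t>0$.

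It then remains to upgrade $t^{-\f1q}$ to $\langle t\rangle^{-\f1q}=(1+t)^{-\f1q}$ and to recover the stated constant, which I would do by splitting into two regimes. For $t\ge1$ one has $\langle t\rangle\le 2t$, hence $t^{-\f1q}\le2^{\f1q}\langle t\rangle^{-\f1q}\le2\langle t\rangle^{-\f1q}$, and the previous step gives $f(t)\le8C_1C_2\langle t\rangle^{-\f1q}$. For $0\le t\le1$ the bound $0\le f\le C_0$ together with $\langle t\rangle^{-\f1q}\ge2^{-\f1q}\ge1/2$ gives $f(t)\le C_0\le 2C_0\langle t\rangle^{-\f1q}$. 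Combining the two regimes produces exactly $f(t)\le\max\{2C_0,8C_1C_2\}\langle t\rangle^{-\f1q}$.

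The computation is otherwise routine; the only genuinely nontrivial point, and the one I would flag as the heart of the argument, is the decision to average the hypothesis over an interval of length proportional to $t$ rather than evaluating it at a single $\tau$. This is what converts the purely integral control $\|f\|_{L^q}\le C_1$ into the pointwise algebraic decay rate $t^{-\f1q}$; any fixed choice of $\tau$ (for instance $\tau=t/2$) would instead require pointwise decay information on $f$ that the hypotheses simply do not provide.
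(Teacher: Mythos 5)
Your proof is correct and lands on exactly the stated constant $\max\{2C_0,8C_1C_2\}$: the averaged inequality gives $f(t)\le 2^{1+\f1q}C_1C_2\,t^{-\f1q}\le 4C_1C_2\,t^{-\f1q}$ for all $t>0$, and the split into the regimes $t\le1$ and $t\ge1$ finishes as you describe. It differs from the paper's proof in one genuine, though modest, respect: how the bound $\|f\|_{L^q(t/2,t)}\le C_1$ is converted into pointwise information. The paper argues by a pigeonhole (mean-value) selection: since $\|f\|_{L^q(t/2,t)}\le C_1$, there exists a single time $t_1\in\,]t/2,t[$ with $f(t_1)\le 2^{\f1q}C_1t^{-\f1q}$, and the hypothesis is invoked only at $\tau=t_1$, giving $f(t)\le C_2f(t_1)+C_1C_2t_1^{-\f1q}\le 2^{1+\f1q}C_1C_2t^{-\f1q}$. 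You instead integrate the hypothesis in $\tau$ over the whole interval $[t/2,t]$ and control the average of $f$ by H\"older, arriving at the very same intermediate constant $2^{1+\f1q}C_1C_2\,t^{-\f1q}$ (your two contributions of $2^{\f1q}C_1C_2\,t^{-\f1q}$ each). The two devices are expressions of the same underlying fact — an $L^q$ bound over an interval of length $t/2$ forces $f$ to be of size $O(t^{-\f1q})$ somewhere in that interval, respectively on average — so the proofs are close cousins; what your averaging buys is that it bypasses the selection of a distinguished point entirely (no existence step to justify, and measurability of $f$ would suffice), while the paper's version uses the hypothesis only once, at one time. Your closing observation is also accurate and is precisely the reason the paper selects $t_1$ rather than prescribing it: a fixed choice such as $\tau=t/2$ would require pointwise decay of $f$, which the hypotheses do not supply.
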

\begin{proof} For any fixed $t>0,$ due to $\|f\|_{L^q(t/2,t)}\leq C_1,$
there exists $t_1\in ]t/2,t[$ so that
$f(t_1)\leq 2^{\f1q}C_1t^{-{\f1q}}.$ As a result, it comes out
\begin{align*}
f(t)\leq \min\bigl\{ C_0,C_2f(t_1)
+C_1C_2 |t_1|^{-\f1q} \bigr\}
\leq \min\bigl\{ C_0,2^{1+\f1q}C_1C_2 t^{-\f1q} \bigr\}.
\end{align*}
In view of this, for $t\leq1$, there holds $\langle t\rangle\leq2$, and thus
$$f(t)\leq C_0\leq 2^{\f1q}C_0\langle t\rangle^{-\f1q}\leq2C_0\langle t\rangle^{-\f1q}.$$
While for $t>1$, there holds $\langle t\rangle\leq2t$, and thus
$$f(t)\leq 2^{1+\f1q}C_1C_2 t^{-\f1q}
\leq 2^{1+\f2q}C_1C_2 \langle t\rangle^{-\f1q}
\leq8C_1C_2 \langle t\rangle^{-\f1q}.$$
Combining the above two situations leads to the desired estimate \eqref{decayft}.
\end{proof}

\begin{lem}\label{lemit}
{\sl Let $f:\R^+\rightarrow\R^+$ satisfy $f(t)\leq C_3$ whenever $t\in[1/2,1[$, and
\begin{equation}\label{it1}
f(t)\leq\ve f(t/2)+C_4 t^{-K},\quad\forall\ t\geq1,
\end{equation}
for some positive constants $\ve$ and $K$ satisfying $2^{K}\ve\leq1/2$. Then there holds
\begin{equation}\label{it2}
f(t)\leq(C_3+2C_4)t^{-K},\quad\forall\ t\geq1.
\end{equation}
}\end{lem}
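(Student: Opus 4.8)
The plan is to iterate the recursion \eqref{it1} a finite, $t$-dependent number of times, until the dyadically rescaled argument lands in the base interval $[1/2,1[$ where the bound $f\leq C_3$ is available. Fix $t\geq1$ and let $n\geq1$ be the unique integer with $t/2^n\in[1/2,1[$, equivalently $2^{n-1}\leq t<2^n$. For each $j=0,1,\dots,n-1$ the point $t/2^j$ satisfies $t/2^j\geq t/2^{n-1}\geq1$, so \eqref{it1} is applicable there; substituting repeatedly and using $(t/2^j)^{-K}=2^{jK}t^{-K}$ gives
\begin{equation*}
f(t)\leq\ve^n f(t/2^n)+C_4\sum_{j=0}^{n-1}\ve^j\,(t/2^j)^{-K}
=\ve^n f(t/2^n)+C_4 t^{-K}\sum_{j=0}^{n-1}(2^K\ve)^j.
\end{equation*}

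The hypothesis $2^K\ve\leq1/2$ then controls both remaining pieces. First, the geometric sum is bounded by $\sum_{j\geq0}2^{-j}=2$, which yields the $2C_4 t^{-K}$ contribution. Second, for the residual term I would write $\ve^n=(2^K\ve)^n\,2^{-nK}\leq 2^{-nK}$ (since $2^K\ve\leq1/2\leq1$), and because $t<2^n$ forces $2^{-nK}\leq t^{-K}$, combining this with the base-case bound $f(t/2^n)\leq C_3$ gives $\ve^n f(t/2^n)\leq C_3 t^{-K}$. Adding the two bounds produces exactly $f(t)\leq(C_3+2C_4)t^{-K}$, which is \eqref{it2}.

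The computation is entirely elementary; the only point requiring care is the bookkeeping that makes the smallness condition $2^K\ve\leq1/2$ do double duty. It simultaneously forces the geometric ratio $2^K\ve$ to be summable, thereby killing the accumulated inhomogeneous terms with a uniform constant, and guarantees $(2^K\ve)^n\leq1$, so that the homogeneous remainder $\ve^n f(t/2^n)$ decays like $2^{-nK}\sim t^{-K}$ instead of polluting the final rate. I do not anticipate any genuine obstacle here; the two things to check are that the interval membership $t/2^n\in[1/2,1[$ legitimizes invoking $f\leq C_3$, and that every intermediate argument $t/2^j$ stays $\geq1$ so that \eqref{it1} may be applied at each step of the iteration.
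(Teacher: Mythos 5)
Your proposal is correct and follows essentially the same route as the paper's own proof: iterate the dyadic recursion down to the base interval $[1/2,1[$, bound the accumulated inhomogeneous terms by the geometric series with ratio $2^K\ve\leq1/2$, and absorb the residual term via $\ve^n\leq2^{-nK}\leq t^{-K}$. The only difference is a harmless shift of index (the paper defines $n$ by $1/2\leq2^{-(n+1)}t<1$ and telescopes weighted inequalities rather than substituting repeatedly), so there is nothing to add.
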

\begin{proof}
For any $t\geq1$, there exists a unique integer $n$ such that
$1/2\leq2^{-(n+1)}t<1$. For each $i=0,~1,~\cdots~,n$, in view of \eqref{it1}, we deduce that
$$\ve^i f(2^{-i}t)\leq\ve^{i+1} f(2^{-(i+1)}t)+C_4\ve^i 2^{iK} t^{-K}.$$
By summing up the above inequalities for $i$ varying from $0$ to $n,$  we infer
\begin{equation}\begin{split}\label{it3}
f(t)&\leq\ve^{n+1} f(2^{-(n+1)}t)+C_4 t^{-K}\sum_{i=0}^n(2^K\ve)^i\\
&\leq\ve^{n+1} C_3+2C_4t^{-K},
\end{split}\end{equation}
where we used $2^K\ve\leq1/2$ in the last step.
Then \eqref{it2} follows from \eqref{it3} and the fact that
$$\ve^{n+1}<2^{-(n+1)K}<t^{-K}.$$
This completes the proof of this lemma.
\end{proof}

\setcounter{equation}{0}
\section{Decay estimates of $\|u(t)\|_{\cB^0_2}$ when $\s\in]0,2[$}

For completion, let us first give a brief proof that the system \eqref{1.1}
is globally well-posed under the assumptions of Theorem \ref{thm1}. As we are considering the case that $\rho_0$ is close to $1$, it is convenient to introduce $a=\rho^{-1}-1$, and rewrite \eqref{1.1} in the following form
\begin{equation}\label{2dua}
\left\{\begin{array}{l}
\displaystyle \pa_t a+u\cdot\nabla a=0,
\qquad\qquad\qquad\qquad (t,x)\in\R^+\times\R^2,\\
\displaystyle \pa_tu+u\cdot\nabla u-\Delta u=-(1+a)\nabla P+a\D u,\\
\displaystyle \dive u = 0, \\
\displaystyle a|_{t=0}=a_0=\rho_0^{-1}-1,\quad u|_{t=0}=u_0,
\end{array}\right.
\end{equation}
And in view of Lemma \ref{comBesov}, the assumptions $\rho_0-1\in\cB^1_{2}\cap \cB^2_{2}$ and \eqref{small} imply that
\begin{equation}\label{smallc0}
a_0\in\cB^1_{2}\cap \cB^2_{2},\andf
\|a_0\|_{\cB^1_{2}\cap\cB^2_2}
\exp\bigl(C\|u_0\|_{\cB^0_{2}}
\exp(C\|u_0\|_{L^2}^4)\bigr)
\leq \e_0,
\end{equation}
where $\e_0$ is some small constant depending on the $\e$ in \eqref{small}.

To begin with, let us introduce $T^\ast$ to be
\begin{equation}\begin{split}\label{aB12}
T^\ast\eqdef\sup\bigl\{T>0:
\|a\|_{\wt{L}^\infty_T(\cB^1_{2})}
\leq2\|a_0\|_{\cB^1_{2}}
\exp\bigl(C\|u_0\|_{\cB^0_{2}}\exp(C\|u_0\|_{L^2}^4)\bigr)\bigr\}.
\end{split}\end{equation}
Noticing that the smallness condition \eqref{smallc0}
guarantees that for any $t<T^\ast$,
$\|a\|_{\wt{L}^\infty_t(\cB^1_{2})}$
is still sufficiently small,
thus we can deduce from \eqref{lem4.1b} in Lemma \ref{lem4.1} that
\begin{equation}\begin{split}\label{uB02}
\|u\|_{\wt{L}^\infty_t(\cB^0_{2})}+\|\Dh u\|_{L^1_t(\cB^0_{2})}
&\leq2\|u_0\|_{\cB^0_{2}}
\exp\bigl(C\|u\|_{L^\infty_T(L^2)}^2
\|\nablah u\|_{L^2_T(L^2)}^2\bigr)\\
&\leq2\|u_0\|_{\cB^0_{2}}
\exp\bigl(C\|u_0\|_{L^2}^4\bigr),
\end{split}\end{equation}
where in the last step, we have used the energy inequality:
\begin{equation}\label{Saeq2a}
\f12\int_{\R^2}\r|u|^2\,dx
+\int_0^t\|\na u(t')\|_{L^2}^2\,dt'
\leq\f12\int_{\R^2}\r_0|u_0|^2\,dx,
\end{equation}
and the fact that $\|\rho-1\|_{\cB^1_{2}}
\lesssim\|a\|_{\cB^1_{2}}$ is small and $\cB^1_{2}\hookrightarrow L^\infty$, so that $\|\sqrt\rho-1\|_{L^\infty}
=\|\f{\rho-1}{\sqrt\rho+1}\|_{L^\infty}
\leq\|\rho-1\|_{L^\infty}$ is also small,
and thus
$$\|u\|_{L^2}
\leq\|\sqrt\rho u\|_{L^2}
+\|\sqrt\rho-1\|_{L^\infty}\|u\|_{L^2}
\Rightarrow C^{-1}\|u\|_{L^2}
\leq\|\sqrt\rho u\|_{L^2}\leq C\|u\|_{L^2}.$$

Then by using Theorem $3.14$ of \cite{BCD} and the estimate \eqref{uB02}, we infer
\begin{align*}
\|a\|_{\wt{L}^\infty_t(\cB^1_{2})}
&\leq\|a_0\|_{\cB^1_{2}}
\exp\bigl(C\|\nablah u\|_{L^1_t(\cB^1_{2})}\bigr)\\
&\leq\|a_0\|_{\cB^1_{2}}
\exp\bigl(C\|u_0\|_{\cB^0_{2}}
\exp(C\|u_0\|_{L^2}^4)\bigr),
\quad\forall\ t<T^\ast,
\end{align*}
which contradicts to \eqref{aB12}, unless $T^\ast=\infty$.
Thus there must be $T^\ast=\infty$, and
\begin{equation}\begin{split}\label{a2}
&\|u\|_{\wt{L}^\infty(\R^+;\cB^0_{2})}+\|\Dh u\|_{L^1(\R^+;\cB^0_{2})}
\leq2\|u_0\|_{\cB^0_{2}}\exp\bigl(C\|u_0\|_{L^2}^4\bigr),\\
\|&a\|_{\wt{L}^\infty(\R^+;\cB^1_{2})}
\leq2\|a_0\|_{\cB^1_{2}}
\exp\bigl(C\|u_0\|_{\cB^0_{2}}\exp(C\|u_0\|_{L^2}^4)\bigr)
\leq2\e_0.
\end{split}\end{equation}

Now let us turn back to the decay estimates for $\|u(t)\|_{\cB^0_2}$.
One important step is to rewrite the momentum equation of \eqref{1.1}.
Precisely, we first write
\begin{equation}\label{alem1}
\pa_tu-\Dh u+\nablah P=-\pa_t(\vr u)-\diveh(\r u\otimes u)\quad\mbox{with}\quad
\varrho=\rho-1.
\end{equation}
Then for any $t>0$, we use heat semigroup to write \eqref{alem1} as
\begin{align*}
u(t)=e^{t\D}u_0
-\int_0^te^{(t-t')\D}\PP\pa_t
(\vr u)(t')\,dt'
-\int_0^te^{(t-t')\D}\PP
\bigl(\diveh(\r u\otimes u)\bigr)(t')\,dt',
\end{align*}
where $\PP\eqdef {\rm Id}-\nabla \D^{-1}\dive$ is the Leray projector into divergence-free vector fields.
By using integration by parts, we find
\begin{align*}
\int_0^te^{(t-t')\D}\PP\pa_t(\vr u)(t')\,dt'=\PP(\vr u)(t)-e^{t\D}\PP(\vr_0 u_0)
+\int_0^te^{(t-t')\D}\D\PP(\vr u)(t')\,dt'.
\end{align*}
As a result, it comes out
\begin{equation}\begin{split}\label{alem2}
u(t)=e^{t\Dh}(\rho_0u_0)-\PP(\vr u)(t)&-\int_0^te^{(t-t')\Dh}\Dh \PP(\vr u)(t')\,dt'\\
&-\int_0^te^{(t-t')\Dh}\PP\bigl(\diveh(\r u\otimes u)\bigr)(t')\,dt'.
\end{split}\end{equation}

\begin{lem}\label{lemapp}
{\sl Let $p\in[2,\infty[$ and $(\rho,u)$ be a regular enough solution to \eqref{alem2} with $\|\varrho\|_{\wt{L}^\infty_t(\cB^{2/p}_{p})}$ sufficiently small.
Let $h$ and $g$ be some regular enough functions, and $v$ be determined by~\footnote{For the use of this paper, one can simply take $h=g=0$. The reason for putting an additional $h$ and $g$ in \eqref{Saq1} is for future use.
Actually, these terms will appear if we study the derivatives of $u$ in \eqref{alem2}.}
\begin{equation}\begin{split}\label{Saq1}
v(t)=e^{t\Dh}(\rho_0v_0)-\PP(\vr v+h)(t)&-\int_0^te^{(t-t')\Dh}\Dh \PP(\vr v+h)(t')\,dt'\\
&-\int_0^te^{(t-t')\Dh}\PP\bigl(\dive(\r u\otimes v+\r v\otimes u+g)\bigr)(t')\,dt'.
\end{split}\end{equation}
Then for any $s\in]1-2/p,2[,~r\in[1,\infty]$, $\fq \in]1,2[$ and $q\in[1,\infty]$ with $q,\,\fq>(\f1p+\f{s}2)^{-1}$ and
$\f1q+\f1p>\f{s}2$ (or $\f1q+\f1p=\f{s}2$ if $r=\infty$), there holds
\begin{equation}\label{Saq2}
\|v\|_{\wt L^q_t(\dB^{\f2q-s}_{p,r})}
\leq C\Bigl(\|\rho_0v_0\|_{\dB^{-s}_{p,r}}
+\|h\|_{\wt L^q_t(\dB^{\f2q-s}_{p,r})\bigcap
\wt{L}^{\fq}_t(\dB^{\f2{\fq}-s}_{p,r})}
+\|g\|_{\wt L^1_t(\dB^{1-s}_{p,r})}\Bigr)
\exp\bigl(C\|u_0\|_{L^2}^{\f{\fq}{\fq-1}}\bigr).
\end{equation}
While for the borderline case that $s=2$, we have for any $q\in [1,\infty]$ with $\f1q+\f1p\geq1$ that
\begin{equation}\label{endpoint}
\|v\|_{\wt L^q_t(\dB^{\f2q-2}_{p,\infty})}
\leq C\Bigl(\|\rho_0v_0\|_{\dB^{-2}_{p,\infty}}
+\|h\|_{\wt L^q_t(\dB^{\f2q-2}_{p,\infty})}
+\|g\|_{\wt L^1_t(\dB^{-1}_{p,\infty})}
+\|u\|_{L^2_t(L^2)}\|v\|_{L^2_t(L^p)}\Bigr).
\end{equation}
}\end{lem}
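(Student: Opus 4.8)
The plan is to run frequency-localized heat-kernel estimates directly on the Duhamel identity \eqref{Saq1}, bounding each of its four pieces separately and then summing in $\ell^r(\Z)$. The only analytic inputs are the pointwise smoothing bound $\|e^{(t-t')\Dh}\dj f\|_{L^p}\lesssim e^{-c2^{2j}(t-t')}\|\dj f\|_{L^p}$ and its time-integrated, maximal-regularity-type consequences: convolving against $2^{2j}e^{-c2^{2j}\cdot}$ (resp. $2^{j}e^{-c2^{2j}\cdot}$) and applying Young's inequality in time gives, for a source $F$,
\[
2^{j(\f2q-s)}\bigl\|\dj\int_0^te^{(t-t')\Dh}\Dh\PP F\,dt'\bigr\|_{L^q_tL^p}\lesssim 2^{j(\f2{\fq}-s)}\|\dj F\|_{L^{\fq}_tL^p},
\]
and the analogous bound with $\Dh\PP$ replaced by $\PP\dive$, which lowers the required time-integrability of the source to $L^1$ while raising its spatial index to $1-s$. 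For the free evolution $e^{t\Dh}(\rho_0v_0)$ the same computation with no extra derivative produces, after summation, the term $\|\rho_0v_0\|_{\dB^{-s}_{p,r}}$ in \eqref{Saq2}.

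First I would dispose of the density terms. The undifferentiated term $\PP(\vr v+h)$ is estimated pointwise in time via the borderline product law of Lemma \ref{lemproductlaw}: under the hypotheses on $q$ the index $\f2q-s$ lies in $]-2/p,2/p]$, so $\|\vr v\|_{\dB^{\f2q-s}_{p,r}}\lesssim\|\vr\|_{\cB^{2/p}_p}\|v\|_{\dB^{\f2q-s}_{p,r}}$, and since $\|\vr\|_{\wt L^\infty_t(\cB^{2/p}_p)}$ is small this piece is absorbed into the left-hand side, while $h$ leaves $\|h\|_{\wt L^q_t(\dB^{\f2q-s}_{p,r})}$. The differentiated density integral is handled by the maximal-regularity bound above with $F=\vr v+h$, controlling it by $\|\vr v+h\|_{\wt L^{\fq}_t(\dB^{\f2{\fq}-s}_{p,r})}$; the constraint $\fq>(\f1p+\f s2)^{-1}$ again makes the product law applicable, so the $\vr v$ part is absorbed by smallness and $h$ leaves $\|h\|_{\wt L^{\fq}_t(\dB^{\f2{\fq}-s}_{p,r})}$. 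This is exactly why two different time-exponents on $h$ occur in \eqref{Saq2}.

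The crux is the convection integral, which the $\PP\dive$ smoothing bound reduces to controlling $\|\r u\otimes v+\r v\otimes u+g\|_{\wt L^1_t(\dB^{1-s}_{p,r})}$. The source $g$ yields its stated contribution, and writing $\r=1+\vr$ the $\vr$-part is harmless by smallness, so everything rests on $u\otimes v$ and $v\otimes u$. Since $u$ is \emph{not} small these cannot be absorbed outright. My plan is to expand the products by Bony's decomposition and estimate them through Lemma \ref{lemproductlaw} after a H\"older step in time that places $u$ in an energy-admissible norm $L^{\fq/(\fq-1)}_t(L^k)$ and $v$ in the companion $L^{\fq}_t$-norm; the condition $s>1-2/p$ (so that $1-s<2/p$) makes this product legitimate, and $\f1q+\f1p>\f s2$ keeps the target index above $-2/p$. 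In two dimensions the energy bounds $u\in L^\infty_t(L^2)\cap L^2_t(\dot H^1)$ yield $\|u\|_{L^{\fq/(\fq-1)}_t(L^k)}\lesssim\|u_0\|_{L^2}$ by Gagliardo--Nirenberg interpolation, uniformly in $t$. Splitting $[0,t]$ into subintervals on each of which this norm of $u$ is small enough to absorb the convection contribution, the number of subintervals scales like $\|u_0\|_{L^2}^{\fq/(\fq-1)}$, and the constant lost per subinterval accumulates to the factor $\exp(C\|u_0\|_{L^2}^{\fq/(\fq-1)})$. I expect this time-splitting/Gronwall mechanism, and in particular matching the H\"older exponent to the claimed power $\fq/(\fq-1)$, to be the main obstacle.

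Finally, when $s=2$ the product law degenerates at the endpoint, forcing $r=\infty$ and preventing absorption of the convection term through the $\dB^{1-s}_{p,r}$ estimate. Here I would bound the convection integral in $\wt L^q_t(\dB^{\f2q-2}_{p,\infty})$ directly: the target regularity $\f2q-2\leq-1$ lets one estimate $\|\PP\dive(u\otimes v)\|_{\dB^{-1}_{p,\infty}}$ through $\|u\otimes v\|_{L^p}$-type quantities by Bernstein, and one H\"older step in space and time isolates the explicit bilinear term $\|u\|_{L^2_t(L^2)}\|v\|_{L^2_t(L^p)}$ retained on the right of \eqref{endpoint}; the free, density and $g$ contributions are treated exactly as above. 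This completes the plan.
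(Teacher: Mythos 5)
Your proposal follows the paper's skeleton almost exactly up to one key point: the frequency-localized heat/maximal-regularity bounds on each Duhamel piece, the absorption of both density terms by the smallness of $\|\vr\|_{\wt L^\infty_t(\cB^{2/p}_p)}$ via the product law, Bony's decomposition for the convection term with $u$ measured in $L^{\fq'}_t(\dB^{2/\fq'}_{2,\infty})$ (your Gagliardo--Nirenberg norm plays the same role), whose total size over $[0,\infty[$ is $\lesssim\|u_0\|_{L^2}^{\fq'}$ by energy interpolation, and the endpoint $s=2$ via $L^{\f{2p}{2+p}}\hookrightarrow\dB^{-1}_{p,\infty}$ are all as in the paper. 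Where you diverge is the mechanism converting this integrability of $u$ into the factor $\exp(C\|u_0\|_{L^2}^{\fq/(\fq-1)})$. The paper does not split time: it introduces the weighted unknown $v_\lam(t)=v(t)\exp\bigl(-\lam\int_0^t\|u(t')\|_{\dB^{2/\fq'}_{2,\infty}}^{\fq'}\,dt'\bigr)$, puts the weight inside the frequency-localized Duhamel integrals so that the convection contribution becomes $\lesssim\lam^{-1/\fq'}\|v_\lam\|_{\wt L^\fq_t(\dB^{2/\fq-s}_{p,r})}$, absorbs it by taking $\lam$ large, and recovers the exponential factor on removing the weight. This is precisely the Gronwall argument adapted to Chemin--Lerner norms; a plain Gronwall or a naive splitting is not automatic here, because $\wt L^q_t$ norms are not time integrals of a pointwise quantity.

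The concrete gap in your time-splitting plan: the per-subinterval estimate does not close as stated. To restart at a splitting time $T_i$ you would need pointwise-in-time control of $v$ (more precisely of $v+\PP(\vr v+h)$) in $\dB^{-s}_{p,r}$; but that corresponds to the case $q=\infty$, which the hypotheses exclude exactly when $s>2/p$, since the product law for $\vr v$ fails below regularity $-2/p$ --- and this case genuinely occurs in the paper's application (e.g.\ $p=2$, $\s\in]1,2[$, where the admissible set of exponents is $[1,\f2{\s-1}]$). So no norm of $v$ at the splitting times is available to restart from, and a ``good time slice'' chosen by Chebyshev lives in the higher-regularity space $\dB^{2/q-s}_{p,r}$, which does not embed into $\dB^{-s}_{p,r}$. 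The splitting can be repaired, but only by \emph{not} restarting: keep all Duhamel integrals based at $t=0$, split only the source integrals, and handle the history $\int_0^{T_i}$ on the window $[T_i,T_{i+1}]$ through the factorization $\int_0^{T_i}e^{(t-t')\Dh}\psi\,dt'=e^{(t-T_i)\Dh}\int_0^{T_i}e^{(T_i-t')\Dh}\psi\,dt'$, bounding the bracket in $\dB^{-s}_{p,r}$ by maximal regularity; the convection history then costs $\delta\sum_{k<i}X_k$ (with $\delta$ the per-window smallness of $u$ and $X_k$ the window norms of $v$), and an induction on the windows yields geometric growth $(1-C\delta)^{-N}$ over $N\sim\|u_0\|_{L^2}^{\fq'}$ windows, recovering the exponential factor. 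This bookkeeping --- or the paper's weight --- is the missing ingredient. A minor additional slip: bounding $\int_0^te^{(t-t')\Dh}\Dh\PP(\vr v+h)\,dt'$ by the $\wt L^{\fq}_t$ norm of the source requires $q\geq\fq$, which is not assumed; the paper keeps the exponent $q$ there, and the $\wt L^{\fq}_t$ norm of $h$ in \eqref{Saq2} arises instead from first closing the estimate at $q=\fq$ and then reinserting it into the general-$q$ inequality.
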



Let us postpone the proof of Lemma \ref{lemapp} till the end of this section, and continue the decay estimates of $\|u(t)\|_{\cB^0_2}$. To verify the conditions of Lemma \ref{lemapp}, we can use Lemma \ref{comBesov} and \eqref{a2} to deduce that $\vr=\rho-1=-\f{a}{1+a}$ is sufficiently small, namely
$$\|\vr\|_{\wt{L}^\infty(\R^+;\cB^1_{2})}
\leq C\|a\|_{\wt{L}^\infty(\R^+;\cB^1_{2})}
\leq C\e_0.$$
Then we can use Lemma \ref{lemapp} with $h=g=0$ and $p=2$ to derive for any $\s\in]0,2[$ that
\begin{equation}\label{SAeq1}
\|u\|_{\wt L^q_t(\dB^{\f2q-\s}_{2,\infty})}
\leq C\|\rho_0 u_0\|_{\dB^{-\s}_{2,\infty}}
\exp\bigl(C\|u_0\|_{L^2}^4\bigr),\quad\forall\ q\in \Omega_\s,
\end{equation}
where $\Omega_\s=]\f2{1+\s},\infty]$ when $\s\in]0,1]$, $\Omega_\s=[1,\f2{\s-1}]$ when $\s\in]1,2[$,
and for $\s=2$ that
\begin{equation}\label{3.11}
\|u\|_{\wt L^q_t(\dB^{\f2q-2}_{2,\infty})}
\leq C\bigl(\|\rho_0u_0\|_{\dB^{-2}_{2,\infty}}
+\|u\|_{L^2_t(L^2)}^2\bigr),
\quad\forall\ q\in [1,2].
\end{equation}

Let us first focus on the case when $\s\in]0,2[$.
Noticing that for any $0<\varsigma<\min\{\s,2-\s\}$, we have
$\f{2}{\sigma\pm\varsigma}\in\Omega_\s$.
Then we can use \eqref{SAeq1} with $q=\f{2}{\sigma\pm\varsigma}$ to get
\begin{align*}
\|u\|_{L_t^{\f{2}{\sigma}}(\cB^0_{2})}
\leq\|u\|_{\wt L_t^{\f{2}{\sigma}}(\cB^0_{2})}
\leq&C\|u\|_{\wt L_t^{\f{2}{\sigma-\varsigma}}
(\dB^{-\varsigma}_{2,\infty})}^{\f12}
\|u\|_{\wt L_t^{\f{2}{\sigma+\varsigma}}
(\dB^{\varsigma}_{2,\infty})}^{\f12}\\
\leq& C\|\r_0u_0\|_{\dB^{-\s}_{2,\infty}}
\exp\bigl(C\|u_0\|_{L^2}^4\bigr)\leq C\alp,
\end{align*}
which together with \eqref{a2} and
Lemma \ref{Salem3} ensures that
\begin{equation}\begin{split}\label{3.12}
\|u(t)\|_{\cB^0_{2}}\lesssim \alp \langle t\rangle^{-\f{\sigma}2},
\quad\forall\ \s\in]0,2[.
\end{split}\end{equation}

While the case when $\s=2$ is more complicated, and we shall leave it to Section \ref{sec6}.

\begin{proof}[{\bf Proof of Lemma \ref{lemapp}}]
For some locally integrable nonnegative function $f$ and large positive constant $\lam$ to be determined later, let us denote \begin{equation}\label{alem3}
v_{\lam}(t,x)
\eqdef v(t,x)\exp\Bigl(-\lam\int_0^t f(t')\,dt'\Bigr).
\end{equation}
Then we can deduce from \eqref{Saq1} that
\begin{align*}
v_{\lam}(t)=&\exp\Bigl(-\lam\int_0^t f(t')\,dt'\Bigr)e^{t\Dh}(\rho_0v_0)-\PP(\vr v_{\lam}+h_{\lam})(t)\\
&-\int_0^t\exp\Bigl(-\lam\int_{t'}^t f(\tau)\,d\tau\Bigr) e^{(t-t')\Dh}\Dh \PP(\vr v_{\lam}+h_{\lam})(t')\,dt'\\
&-\int_0^t \exp\Bigl(-\lam\int_{t'}^t f(\tau)\,d\tau\Bigr) e^{(t-t')\Dh}\PP\Bigl(\dive\bigl(\r u\otimes v_{\lam}+\r v_{\lam}\otimes u +g_{\lam}\bigr)\Bigr)(t')\,dt'.
\end{align*}
By assumptions, there holds $\f2q-s<\f2p$ and $\f2q-s>-\f2p$ (or $\f2q-s=-\f2p$ if $r=\infty$). Hence we can use Lemma \ref{lemproductlaw} to get
\begin{equation}\begin{split}\label{3.32}
\Bigl\|\exp\Bigl(-\lam\int_0^t
f(t')\,dt'\Bigr)&e^{t\Dh}(\rho_0v_0)
-\PP(\vr v_{\lam}+h_{\lam})\Bigr\|
_{\wt{L}^q_t(\dB^{\f2q-s}_{p,r})}\\
&\lesssim \|\rho_0v_0\|_{\dB^{-s}_{p,r}}
+\|\vr\|_{\wt{L}^\infty_t(\cB^{\f2p}_p)}
\|v_{\lam}\|_{\wt{L}^q_t(\dB^{\f2q-s}_{p,r})}
+\|h\|_{\wt{L}^q_t(\dB^{\f2q-s}_{p,r})}.
\end{split}\end{equation}
While for any function $\psi$, the maximal regularity estimate to heat semigroup reads
\begin{equation}\label{alem5}
\bigl\|\int_0^t e^{(t-t')\Dh}\psi(t')\,dt'
\bigr\|_{\wt{L}^{q_1}_t(\dB^{\f2{q_1}-s}_{p,r})}
\lesssim \|\psi\|_{\wt{L}^{q_2}_t(\dB^{-2+\f2{q_2}-s}_{p,r})}\,,
\quad\forall\ 1\leq q_2\leq q_1\leq\infty.
\end{equation}
By using this, we can obtain
\begin{equation}\begin{split}\label{3.34}
\bigl\|\int_0^t \exp\Bigl(-\lam\int_{t'}^t f(\tau)\,d\tau\Bigr)&
e^{(t-t')\Dh}\Dh \PP(\vr v_{\lam}+h_{\lam})(t')\,dt'
\bigr\|_{\wt{L}^q_t(\dB^{\f2q-s}_{p,r})}\\
&\lesssim\|\vr v_{\lam}+h_{\lam}\|
_{\wt{L}^q_t(\dB^{\f2q-s}_{p,r})}\\
&\lesssim \|\vr\|_{\wt{L}^\infty_t(\cB^{\f2p}_p)}
\|v_{\lam}\|_{\wt{L}^q_t(\dB^{\f2q-s}_{p,r})}
+\|h\|_{\wt{L}^q_t(\dB^{\f2q-s}_{p,r})}.
\end{split}\end{equation}

On the other hand, by applying Bony's decomposition
(see \cite{Bo81}), we write
$$\rho u\otimes v_{\lam}=T_u \rho v_{\lam}
+T_{\rho v_{\lam}}u+R(u,\rho v_{\lam}),$$
where
\begin{align*}
T_a b\eqdef\sum_{j\in\Z}S_{j-1}a\D_jb,
\andf R(a,b)\eqdef\sum_{j\in\Z}\D_ja
\widetilde{\Delta}_jb
\quad \mbox{with}\quad \widetilde{\Delta}_j\eqdef \sum_{|j'-j|\leq1}\Delta_{j'}.
\end{align*}
For any $\fq\in ]1,2[$ and $\fq'=(1-\f{1}{\fq})^{-1}\in]2,\infty[$, we can use Lemma \ref{lemBern} to obtain
$$\|S_{j-1} u\|_{L^\infty}\lesssim 2^{j(1-\f2{\fq'})}\|u\|_{\dB^{\f2{\fq'}-1}_{\infty,\infty}}
\lesssim 2^{j(1-\f2{\fq'})}\|u\|_{\dB^{\f2{\fq'}}_{2,\infty}}.$$
By using this and H\"older's inequality, for any weight $\phi(t)\geq 0$, we have
\begin{equation}\begin{split}\label{3.35}
\|\D_j(T_u \rho v_{\lam})\|_{L^1_{t,\phi}(L^p)}
\lesssim & \sum_{|j'-j|\leq 4}
\|S_{j'-1} u\|_{L^{\fq'}_{t,\phi^{\fq'}}(L^\infty)}
\|\D_{j'}(\rho v_{\lam})\|_{L^\fq_t(L^p)}\\
\lesssim &\sum_{|j'-j|\leq 4}2^{j'(1-\f2{\fq'})}
\|u\|_{L^{\fq'}_{t,\phi^{\fq'}}
(\dB^{\f2{\fq'}}_{2,\infty})}
\cdot d_{j',r}2^{j'(s-\f2{\fq})}
\|\rho v_{\lam}\|_{\wt L^\fq_t(\dB^{\f2\fq-s}_{p,r})}\\
\lesssim & d_{j,r}2^{j(s-1)}
\|u\|_{L^{\fq'}_{t,\phi^{\fq'}}(\dB^{\f2{\fq'}}_{2,\infty})}
\|\rho v_{\lam}\|_{\wt L^\fq_t(\dB^{\f2\fq-s}_{p,r})}.
\end{split}\end{equation}
Here and in all that follows, we always denote $(d_{j,r})_{k\in\Z}$ to be a generic element on the unit sphere of $\ell^r(\Z)$.
Similarly, in view of the assumption that $s+\f2p-\f2{\fq}>0$, we have
\begin{align*}
\|S_{j-1}(\rho v_{\lam})\|_{L^\fq_t(L^\infty)}
&\lesssim d_{j,r}2^{j(s+\f2p-\f2\fq)}
\|\rho v_{\lam}\|_{\wt L^\fq_t(\dB^{\f2\fq-\f2p-s}_{\infty,r})}\\
&\lesssim d_{j,r}2^{j(s+\f2p-\f2\fq)}
\|\rho v_{\lam}\|_{\wt L^\fq_t(\dB^{\f2\fq-s}_{p,r})},
\end{align*}
and thus
\begin{equation}\begin{split}
\|\D_j(T_{\rho v_{\lam}}u)\|_{L^1_{t,\phi}(L^p)}
\lesssim& 2^{j(1-\f2p)}\sum_{|j'-j|\leq 4}
\|S_{j'-1}(\rho v_{\lam})\|_{L^\fq_t(L^\infty)}
\|\D_{j'} u\|_{L^{\fq'}_{t,\phi^{\fq'}}(L^2)}\\
\lesssim & d_{j,r}2^{j(s-1)}
\|u\|_{L^{\fq'}_{t,\phi^{\fq'}}(\dB^{\f2{\fq'}}_{2,\infty})}
\|\rho v_{\lam}\|_{\wt L^\fq_t(\dB^{\f2\fq-s}_{p,r})}.
\end{split}\end{equation}
For the remainder term, it follows from Lemma \ref{lemBern} that for any $s<2$, we have
\begin{equation}\begin{split}\label{3.37}
\|\D_j R(u,\rho v_{\lam})\|_{L^1_{t,\phi}(L^p)}
\lesssim & 2^j\sum_{j'\geq j-5}
\|\D_{j'}u\|_{L^{\fq'}_{t,\phi^{\fq'}}(L^2)}
\|\wt{\D}_{j'}(\rho v_{\lam})\|_{L^\fq_t(L^p)}\\
\lesssim & 2^j\sum_{j'\geq j-5} d_{j',r}2^{j'(s-2)}
\|u\|_{L^{\fq'}_{t,\phi^{\fq'}}(\dB^{\f2{\fq'}}_{2,\infty})}
\|\rho v_{\lam}\|_{\wt L^\fq_t(\dB^{\f2\fq-s}_{p,r})}\\
\lesssim & d_{j,r}2^{j(s-1)}
\|u\|_{L^{\fq'}_{t,\phi^{\fq'}}
(\dB^{\f2{\fq'}}_{2,\infty})}
\|\rho v_{\lam}\|
_{\wt L^\fq_t(\dB^{\f2\fq-s}_{p,r})}.
\end{split}\end{equation}

By combining the above three estimates \eqref{3.35}-\eqref{3.37}, we deduce
\begin{align*}
\|\r u\otimes v_{\lam}+\r v_{\lam}\otimes u\|
_{\wt L^1_{t,\phi}(\dB^{1-s}_{p,r})}
\lesssim\|u\|_{L^{\fq'}_{t,\phi^{\fq'}}
(\dB^{\f2{\fq'}}_{2,\infty})}
\|\rho\|_{\wt L^\infty_t(\cB^{\f2p}_p)}
\|v_{\lam}\|_{\wt L^\fq_t(\dB^{\f2\fq-s}_{p,r})},
\quad \forall\ s\in ]1-2/p,2[\,,
\end{align*}
which together with \eqref{alem5} ensures that
\begin{align*}
\cA\eqdef&\Bigl\|\int_0^t \exp\Bigl(-\lam\int_{t'}^t f(\tau)\,d\tau\Bigr)
e^{(t-t')\D}\PP\diveh(\r u\otimes v_{\lam}
+\r v_{\lam}\otimes u+g_{\lam})(t')\,dt'
\Bigr\|_{\wt{L}^q_t(\dB^{\f2q-s}_{p,r})}\\
&\lesssim\Bigl( \int_0^t \exp\Bigl(-\lam \fq'\int_{t'}^t f(\tau)\,d\tau\Bigr)\| u(t')\|_{\dB^{\f2{\fq'}}_{2,\infty}}^{\fq'}
\,dt'\Bigr)^{\f1{\fq'}}
\|v_{\lam}\|_{\wt L^\fq_t(\dB^{\f2\fq-s}_{p,r})}
+\|g_{\lam}\|_{\wt L^1_t(\dB^{1-s}_{p,r})}.
\end{align*}

Now let us take $f(t)=\|u(t)\|_{\dB_{2,\infty}^{\f2{\fq'}}}^{\fq'}$,
which is reasonable since
\begin{equation}\label{controlfr}
\int_0^t \|u(t')\|_{\dB_{2,\infty}^{\f2{\fq'}}}^{\fq'}\,dt'
\leq\int_0^t \|u(t')\|_{L^2}^{\fq'-2}
\|\nablah u(t')\|_{L^2}^2\,dt'
\leq C\|u_0\|_{L^2}^{\fq'},\quad\forall\ t>0.
\end{equation}
For this choice of $f$, we can directly calculate that
$$\int_0^t \exp\Bigl(-\lam \fq'\int_{t'}^t f(\tau)\,d\tau\Bigr)
\| u(t')\|_{\dB^{\f2{\fq'}}_{2,\infty}}^{\fq'}\,dt'
=\f{1}{\lam \fq'}\Bigl(1-\exp(-\lam \fq'\int_{0}^t f(\tau)\,d\tau)\Bigr)
\leq\f{1}{\lam \fq'}.$$
As a result, we obtain
\begin{equation}\label{3.39}
\cA\lesssim\lam^{-\f1{\fq'}}
\|v_{\lam}\|_{\wt L^\fq_t(\dB^{\f2\fq-s}_{p,r})}
+\|g\|_{\wt L^1_t(\dB^{1-s}_{p,r})}.
\end{equation}

Now by combining the estimates \eqref{3.32},~\eqref{3.34} and \eqref{3.39}, we finally achieve
\begin{equation}\begin{split}\label{alem6}
\|v_{\lam}\|_{\wt{L}^q_t(\dB^{\f2q-s}_{p,r})}
\leq C\Bigl(\|\rho_0v_0\|_{\dB^{-s}_{p,r}}
&+\|\varrho\|_{\wt{L}^\infty_t(\cB^{\f2p}_p)}
\|v_{\lam}\|_{\wt{L}^q_t(\dB^{\f2q-s}_{p,r})}
+\lam^{-\f1{\fq'}}\|v_{\lam}\|_{\wt L^\fq_t(\dB^{\f2\fq-s}_{p,r})}\\
+&\|h\|_{\wt{L}^q_t(\dB^{\f2q-s}_{p,r})}
+\|g\|_{\wt L^1_t(\dB^{1-s}_{p,r})}\Bigr),
\quad \forall\ \s\in ]1-2/p,2[.
\end{split}\end{equation}
In particular, if we take $q=\fq$ (which is always permitted) in \eqref{alem6}, $\lam$ large enough and $\|\varrho\|_{\wt{L}^\infty_t(\cB^{2/p}_p)}$ small enough so that $C(\|\varrho\|_{\wt{L}^\infty_t(\cB^{2/p}_p)}
+\lam^{-\f1{\fq'}})\leq \f12$,
then \eqref{alem6} gives
$$\|v_{\lam}\|_{\wt{L}^\fq_t(\dB^{\f2\fq-s}_{p,r})}
\leq C\Bigl(\|\rho_0v_0\|_{\dB^{-s}_{p,r}}
+\|h\|_{\wt{L}^\fq_t(\dB^{\f2\fq-s}_{p,r})}
+\|g\|_{\wt L^1_t(\dB^{1-s}_{p,r})}\Bigr).$$
By inserting this into \eqref{alem6}, we achieve for any $s\in ]1-2/p,2[$ that
\begin{equation}\label{alem7}
\|v_{\lam}\|_{\wt{L}^q_t(\dB^{\f2q-s}_{p,r})}
\leq C\Bigl(\|\rho_0v_0\|_{\dB^{-s}_{p,r}}
+\|h\|_{\wt{L}^q_t(\dB^{\f2q-s}_{p,r})
\cap\wt{L}^\fq_t(\dB^{\f2\fq-s}_{p,r})}
+\|g\|_{\wt L^1_t(\dB^{1-s}_{p,r})}\Bigr).
\end{equation}
Yet it follows from the bound \eqref{controlfr} that
\begin{align*}
\|v\|_{\wt{L}^q_t(\dB^{\f2q-s}_{p,r})}
&\leq\|v_{\lam}\|_{\wt{L}^q_t(\dB^{\f2q-s}_{p,r})}
\exp\Bigl(\lam\int_0^t \|u(t')\|_{\dB_{2,\infty}^{\f2{\fq'}}}^{\fq'}\,dt'\Bigr)\\
&\leq\|v_{\lam}\|_{\wt{L}^q_t(\dB^{\f2q-s}_{p,r})}
\exp\bigl(C\lam\|u_0\|_{L^2}^{\fq'}\bigr),
\end{align*}
which together with \eqref{alem7} leads to \eqref{Saq2}.
\smallskip

While for the case when $s=2$, we can use \eqref{3.32} and \eqref{3.34} with $f=0$ to get
\begin{equation}\begin{split}\label{3.42}
\|v\|_{\wt{L}^q_t(\dB^{\f2q-2}_{p,\infty})}
\leq C\bigl(\|\rho_0v_0\|_{\dB^{-2}_{p,\infty}}
+\|\varrho\|_{\wt{L}^\infty_t(\cB^{\f2p}_p)}
\|v\|_{\wt{L}^q_t(\dB^{\f2q-2}_{p,\infty})}
+\|h\|_{\wt{L}^q_t(\dB^{\f2q-2}_{p,\infty})}
+\cB\bigr),
\end{split}\end{equation}
where
$$\cB\eqdef\Bigl\|\int_0^t
e^{(t-t')\D}\PP\diveh(\r u\otimes v
+\r v\otimes u+g)(t')\,dt'
\Bigr\|_{\wt{L}^q_t(\dB^{\f2q-2}_{p,\infty})}.$$
By using \eqref{alem5}, Minkowski's inequality and the relation $L^{\f{2p}{2+p}}\hookrightarrow \dB^{-1}_{p,\infty}$, we have
\begin{align*}
\cB&\lesssim\|\rho u\otimes v+\rho v\otimes u\|_{\wt L^1_t(\dB^{-1}_{p,\infty})}
+\|g\|_{\wt L^1_t(\dB^{-1}_{p,\infty})}\\
&\leq\|\rho u\otimes v+\rho v\otimes u\|_{L^1_t(\dB^{-1}_{p,\infty})}
+\|g\|_{\wt L^1_t(\dB^{-1}_{p,\infty})}\\
&\lesssim\|\rho\|_{L^\infty_t(L^\infty)}
\|u\|_{L^2_t(L^2)}\|v\|_{L^2_t(L^p)}
+\|g\|_{\wt L^1_t(\dB^{-1}_{p,\infty})}.
\end{align*}
By substituting this into \eqref{3.42} and taking $\|\varrho\|_{\wt{L}^\infty_t(\cB^{2/p}_p)}$ to be small enough, we achieve the desired estimate \eqref{endpoint}.
This completes the proof of Lemma \ref{lemapp}.
\end{proof}

\section{Decay estimates of $\|u(t)\|_{\cB^0_p}$ when $\s\in]0,2[$}\label{sec4}

The aim of this section is to give decay estimates of $\|u(t)\|_{\cB^0_p}$ when $\s\in]0,2[$, namely
\begin{equation}\label{4.1}
\|u(t)\|_{\cB^0_p}
\lesssim \alp^2\w{t}^{\f1p-\f12-\f\s2},
\quad\forall\ \s\in]0,2[.
\end{equation}
We mention that the case when $p=2$ has already been given by \eqref{3.12}. Thus in the rest of this section, we always assume that $p>2$.

{\bf Step 1.}
Let us first consider the case when $\s\in ]0,1+2/p[\,$. In this case, we always have $2(1+\s-2/p\pm\tau)^{-1}\in\Omega_\s$ whenever $0<\tau<\min\{1+2/p-\s,2/p\}$. Then by applying \eqref{SAeq1} with $q=2(1+\s-2/p\pm\tau)^{-1}$ gives
\begin{align*}
\|u\|_{L^{\f2{1+\s-\f2p}}_t(\cB^{1-\f2p}_{2})}
\lesssim\|u\|_{\wt L^{\f2{1+\s-\f2p+\tau}}_t
(\dB^{1-\f2p+\tau}_{2,\infty})}^{\f12}
\|u\|_{\wt L^{\f2{1+\s-\f2p-\tau}}_t
(\dB^{1-\f2p-\tau}_{2,\infty})}^{\f12}
\lesssim\|\rho_0 u_0\|_{\dB^{-\s}_{2,\infty}}
\exp\bigl(C\|u_0\|_{L^2}^4\bigr),
\end{align*}
which ensures the existence of
some $t_2\in ]0,t[$ so that
\begin{equation}\label{4.3}
\|u(t_2)\|_{\cB^0_{p}}
\lesssim\|u(t_2)\|_{\cB^{1-\f2p}_{2}}
\lesssim\|\rho_0 u_0\|_{\dB^{-\s}_{2,\infty}}
\exp\bigl(C\|u_0\|_{L^2}^4\bigr)t^{\f1p-\f12-\f\s2}.
\end{equation}
On the other hand, for any $p\in]2,\infty[$ and $s\in ]-2/p,1/p]$, \eqref{lem4.1b} ensures
\begin{equation}\begin{split}\label{a4.4}
\|u\|_{\wt{L}^\infty(\tau,t;\cB^s_{p})}
+\|u\|_{L^1(\tau,t;\cB^{s+2}_{p})}
\leq 2\|u(\tau)\|_{\cB^s_{p}}
\exp\bigl(C\|u_0\|_{L^2}^4\bigr),
\quad\forall\ 0\leq\tau<t.
\end{split}\end{equation}
By using \eqref{a4.4} with $s=0$ and then \eqref{4.3},
we get for any $\s\in ]0,1+2/p[$ that
\begin{equation}\begin{split}\label{a5.1}
\|u(t)\|_{\cB^0_{p}}
\leq 2\min\bigl\{\|u_0\|_{\cB^0_{p}},
\|u(t_2)\|_{\cB^0_{p}}\bigr\}
\exp\bigl(C\|u_0\|_{L^2}^4\bigr)
\lesssim\alp \w{t}^{\f1p-\f12-\f\sigma2}.
\end{split}\end{equation}

In particular, when $\sigma=2/3+2/p$, \eqref{a5.1} gives
\begin{equation}\label{uBp01}
\|u(t)\|_{\cB^0_{p}}
\lesssim\al_{\f23+\f2p,p} \w{t}^{-\f56}.
\end{equation}

{\bf Step 2.}
Next, when $1+2/p\leq\sigma<2$,
the assumptions that $\rho_0u_0\in \dB^{-\sigma}_{2,\infty}\cap\cB^0_2$ actually imply $\rho_0u_0\in \dB^{-(\f23+\f2p)}_{2,\infty}$, and $\al_{\f23+\f2p,p}\lesssim\alp$. As a result, we can use \eqref{uBp01} to get
\begin{equation}\label{4.6}
\|u(t)\|_{\cB^0_{p}}
\lesssim\al_{\s,p} \w{t}^{-\f56},
\quad\forall\ 1+2/p\leq\sigma<2,
\end{equation}
which is a first-step estimate. In the following, we shall refine it to fit \eqref{4.1}. To do this, we use the integral form \eqref{alem2} again to obtain
\begin{equation}\begin{split}\label{a5.1a}
\|u(t)\|_{\cB^0_p}\leq &\|e^{t\Dh}(\rho_0u_0)\|_{\cB^0_p}
+\|\PP(\vr u)(t)\|_{\cB^0_p}+\bigl\|\int_0^te^{(t-t')\Dh}\Dh \PP(\vr u)(t')\,dt'\bigr\|_{\cB^0_p}\\
&+\bigl\|\int_0^te^{(t-t')\Dh}\PP
\bigl(\diveh(\r u\otimes u)\bigr)(t')\,dt'\bigr\|_{\cB^0_p}.
\end{split}\end{equation}

We first get by using Lemma \ref{lemproductlaw} and the fact that $\|\vr\|_{\wt{L}^\infty_t(\cB^1_{2})}$ is sufficiently small that
\begin{equation}\label{a8}
\|\PP(\vr u)(t)\|_{\cB^0_p}
\leq C\|\vr(t)\|_{\cB^1_{2}}
\|u(t)\|_{\cB^0_p}
\leq\f12\|u(t)\|_{\cB^0_p}.
\end{equation}

While for any $p_1\geq p_2$ so that $\f{s_1-s_2}2+\f1{p_2}-\f1{p_1}>0$, there holds
\begin{equation}\begin{split}\label{a6}
\|e^{t\Dh} f\|_{\cB^{s_1}_{p_1}}
&\lesssim\sum_{j\in\Z} 2^{js_1}e^{-Ct2^{2j}}
\|\dhk f\|_{L^{p_1}}\\
&\lesssim t^{-(\f{s_1-s_2}2+\f1{p_2}-\f1{p_1})}
\sum_{j\in\Z} \bigl(t2^{2j}\bigr)^{\f{s_1-s_2}2+\f1{p_2}-\f1{p_1}}
e^{-Ct2^{2j}}\|f\|_{\dB^{s_2}_{p_2,\infty}}\\
&\lesssim t^{-(\f{s_1-s_2}2+\f1{p_2}-\f1{p_1})}
\|f\|_{\dB^{s_2}_{p_2,\infty}},\quad\forall\ t>0.
\end{split}\end{equation}
In particular, \eqref{a6} implies that
\begin{equation}\label{a7}
\|e^{t\Dh}(\rho_0u_0)\|_{\cB^0_p}\lesssim  t^{\f1p-\f12-\f\sigma2}
\|\rho_0u_0\|_{\dB^{-\sigma}_{2,\infty}}.
\end{equation}

Thanks to \eqref{a6} again, we get by using \eqref{3.12} and $\s<2$ that
\begin{align*}
\bigl\|\int_0^{\f{t}2}e^{(t-t')\Dh}\Dh\PP(\vr u )(t')\,dt'\bigr\|_{\cB^0_p}
&\lesssim\int_0^{\f{t}2} (t-t')^{-(\f32-\f1p)}
\|\vr(t')\|_{\cB^1_{2}}\|u(t')\|_{\cB^0_{2}}\,dt'\\
&\lesssim t^{-(\f32-\f1p)}
\int_0^{\f{t}2}\alp\langle t'\rangle^{-\f\s2}\,dt'\\
&\lesssim \alp t^{\f1p-\f12-\f\sigma2},
\end{align*}
and by using \eqref{a4.4} with $s=0$ that
\begin{align*}
\bigl\|\int_{\f{t}2}^t e^{(t-t')\Dh}\Dh \PP(\vr u)(t')\,dt'\bigr\|_{\cB^0_p}
&\lesssim\sum_{j\in\Z}\int_{\f{t}2}^t e^{-C(t-t')2^{2j}}2^{2j}
\|\dhk(\vr u)(t')\|_{L^p}\,dt'\\
&\lesssim \|\vr u\|
_{\wt{L}^\infty(\f t2,t;\cB^0_p)}
\sum_{j\in\Z}d_{j,1}
2^{2j}\int_{\f{t}2}^t e^{-C(t-t')2^{2j}}\,dt'\\
&\lesssim\|\vr\|_{\wt{L}^\infty(\f t2,t;\cB^1_{2})}
\|u\|_{\wt{L}^\infty(\f t2,t;B^0_p)}\\
&\lesssim\|\vr\|_{\wt{L}^\infty_t(\cB^1_{2})}
\|u(t/2)\|_{\cB^0_p}\exp\bigl(C\|u_0\|_{L^2}^4\bigr).
\end{align*}
Combining the above two inequalities gives
\begin{equation}
\bigl\|\int_{0}^t e^{(t-t')\Dh}\Dh \PP(\vr v)(t')\,dt'\bigr\|_{\cB^0_p}
\lesssim\|\vr\|_{\wt{L}^\infty_t(\cB^1_{2})}
\exp\bigl(C\|u_0\|_{L^2}^4\bigr)
\|u(t/2)\|_{\cB^0_p}
+\alp t^{\f1p-\f12-\f\sigma2}.
\end{equation}

Whereas due to $\sigma\geq1+2/p$, we deduce from \eqref{a6} and then \eqref{3.12} that
\begin{align*}
\int_0^{\f t2}\bigl\|e^{(t-t')\Dh}
\PP\diveh(\r u\otimes u)(t')
\bigr\|_{\cB^0_p}\,dt'
&\lesssim\int_0^{\f t2}(t-t')^{-(\f32-\f1p)}
\|\diveh(\r u\otimes u)(t')\|_{\dB^{-1}_{1,\infty}}\,dt'\\
&\lesssim t^{\f1p-\f32}\int_0^{\f t2}
\|\rho(t')\|_{L^\infty}\|u(t')\|_{L^2}\|u(t')\|_{L^2}
\,dt'\\
&\lesssim\alp^2 t^{\f1p-\f32}\int_0^{\f t2}
\langle t' \rangle^{-\sigma}\,dt'\\
&\lesssim\alp^2 t^{\f1p-\f32},
\end{align*}
and by using \eqref{3.12},~\eqref{4.6} and the choice of $p>2$ so that
$-(\f9{10}+\f1{5p})>-1$ that
\begin{align*}
\int_{\f t2}^t\bigl\| e^{(t-t')\Dh}
\PP\diveh(\r u\otimes u)(t')\bigr\|_{\cB^0_p}\,dt'
&\lesssim\int_{\f t2}^t(t-t')^{-(\f9{10}+\f1{5p})}
\|\diveh(\r u\otimes u)(t')\|_{\dB^{-1}_{\f{5p}{2p+6},\infty}}\,dt'\\
&\lesssim\int_{\f t2}^t
(t-t')^{-(\f9{10}+\f1{5p})} \|u(t')\|_{L^2}^{\f45}
\|u(t')\|_{L^p}^{\f65}\,dt'\\
&\lesssim \alp^2
\int_{\f t2}^t (t-t')^{-(\f9{10}+\f1{5p})}
\langle t'\rangle^{-(\f25\s+1)}\,dt'\\
&\lesssim \alp^2
t^{-(\f25\s+\f{9}{10}+\f{1}{5p})}.
\end{align*}
Combining the above two inequalities together with the facts that
$$\f1p-\f32<\f1p-\f12-\f\sigma2,\andf -\bigl(\f25\s+\f{9}{10}+\f{1}{5p}\bigr)
<\f1p-\f12-\f\sigma2$$
leads to
\begin{equation}\label{a10}
\int_0^t\bigl\| e^{(t-t')\Dh}
\PP\diveh(\r u\otimes u)(t')\bigr\|_{\cB^0_p}\,dt'
\lesssim \alp^2
t^{\f1p-\f12-\f\sigma2},\quad\forall\ t\geq1.
\end{equation}

Now by inserting the estimates \eqref{a8} and \eqref{a7}-\eqref{a10} into \eqref{a5.1a}, we achieve
\begin{equation}\label{a91}
\|u(t)\|_{\cB^0_p}
\lesssim \|\vr\|_{\wt{L}^\infty_t(\cB^1_{2})}
\exp\bigl(C\|u_0\|_{L^2}^4\bigr)
\|u(t/2)\|_{\cB^0_p}
+\alp^2 t^{\f1p-\f12-\f\sigma2},\quad\forall\ t\geq1.
\end{equation}
While \eqref{4.6} in particular gives
\begin{equation}\label{a92}
\|u(t)\|_{\cB^0_p}\leq \alp,\quad\forall\ t\leq1.
\end{equation}
In view of \eqref{a91},~\eqref{a92} and the fact that $\|\vr\|_{\wt{L}^\infty_t(\cB^1_{2})}
\exp(C\|u_0\|_{L^2}^4)$ is sufficiently small, then we can use Lemma \ref{lemit} to achieve
$$\|u(t)\|_{\cB^0_p}
\lesssim \alp^2 t^{\f1p-\f12-\f\sigma2},
\quad\forall\ t\geq1,$$
which together with \eqref{a92} ensures the desired estimate \eqref{4.1} when $1+2/p\leq\sigma<2$.

\section{Decay estimates of $\|u(t)\|_{\cB^2_p}$ when $\s\in]0,2[$}\label{sec5}
In view of \eqref{4.1} and \eqref{a4.4}, there holds
$$\|u\|_{\wt{L}^\infty(t/2,t;\cB^0_{p})}
+\|u\|_{L^1(t/2,t;\cB^{2}_{p})}
\leq 2\|u(t/2)\|_{\cB^0_{p}}
\exp\bigl(C\|u_0\|_{L^2}^4\bigr)
\lesssim\alp^2 \langle t\rangle^{\f1p-\f12-\f\sigma2}.$$
As a result, there exists some $t_3\in]t/2,t[$ such that
$$\|u(t_3)\|_{\cB^2_{p}}
\lesssim\alp^2 \langle t\rangle^{\f1p-\f12-\f\sigma2}t^{-1}.$$
Then the interpolation inequality gives for any $0\leq\th\leq2$ that
\begin{equation}\label{a11}
\|u(t_3)\|_{\cB^\th_{p}}\leq\|u(t_3)\|_{\cB^0_{p}}^{1-\f{\th}2}
\|u(t_3)\|_{\cB^2_{p}}^{\f{\th}2}
\lesssim\alp^2 \langle t\rangle^{\f1p-\f12-\f\sigma2}t^{-\f{\th}2}.
\end{equation}
Then for any $\th_1\in[0,2/p]$, we can get by applying Lemma \ref{lem4.1} and \eqref{a2} that
\begin{equation}\begin{split}\label{a12}
\|u\|_{\wt{L}^\infty(t_3,t;\cB^{\th_1}_{p})}
+\|\Dh u\|_{L^1(t_3,t;\cB^{\th_1}_{p})}
&\leq 2\|u(t_3)\|_{\cB^{\th_1}_{p}}
\exp\bigl(\exp(C\|u_0\|_{\cB^0_{2}}^4)\bigr)\\
&\lesssim\alp^2 \langle t\rangle^{\f1p-\f12-\f\sigma2}t^{-\f{\th_1}2}.
\end{split}\end{equation}

On the other hand, by applying $\pa_t$ to the equation of $u$ in \eqref{2dua}, we get
\begin{align*}
u_{tt}+u\cdot\nablah u_t+u_t\cdot\nablah u-(1+a)\D u_t
=-(1+a)\nablah P_t+a_t(\Dh u-\nablah P).
\end{align*}
Applying Lemma \ref{lem4.1} with $b_1=b_2=u$
and $\eta=a_t(\Dh u-\nabla P)$ leads to
\begin{equation}\begin{split}\label{a13}
&\|u_t\|_{\wt{L}^\infty(t_3,t;\cB^{0}_{p})}
+\|\Dh u_t\|_{L^1(t_3,t;\cB^{0}_{p})}
\leq \exp\bigl(\exp(C\|u_0\|_{\cB^0_{2}}^4)\bigr)\\
&\qquad\times\Bigl(2\|u_t(t_3)\|_{\cB^{0}_{p}}
+C\|a_t \|_{\wt{L}^\infty(t_3,t;\cB^0_{p})}
\bigl(\|\Dh u\|_{L^1(t_3,t;\cB^{\f2p}_{p})}
+\|\nablah P\|_{L^1(t_3,t;\cB^{\f2p}_{p})}\bigr)\Bigr).
\end{split}\end{equation}
By taking divergence to the equation of $u$ in \eqref{2dua}, we get
\begin{equation}\label{a13p} -\diveh\bigl((1+a)\nablah P\bigr)
=\diveh\bigl(u\cdot\nablah u-a\Dh u\bigr),
\end{equation}
from which,  Lemma \ref{lemp} and \eqref{a12}, we infer
\begin{align*}
\|\Dh u\|_{L^1(t_3,t;\cB^{\f2p}_{p})}
+\|\nablah P\|_{L^1(t_3,t;\cB^{\f2p}_{p})}
&\lesssim\|\Dh u\|_{L^1(t_3,t;\cB^{\f2p}_{p})}
+\|u\cdot\nablah u\|_{L^1(t_3,t;\cB^{\f2p}_{p})}\\
&\lesssim\|\Dh u\|_{L^1(t_3,t;\cB^{\f2p}_{p})}
+(t-t_3)^{\f12}\|u\|_{L^\infty(t_3,t;\cB^{\f2p}_{p})}
\|\nablah u\|_{L^2(t_3,t;\cB^{\f2p}_{p})}\\
&\lesssim\alp^2 \langle t\rangle^{\f1p-\f12-\f\sigma2}t^{-\f1p}
\Bigl(1+(t-t_3)^{\f12}\alp^2\langle t\rangle^{\f1p-\f12-\f\sigma2}t^{-\f1p}\Bigr).
\end{align*}
Yet due to $t_3\in]t/2,t[$ and $p\geq2,\,\s>0$, one has
$$(t-t_3)^{\f12}\langle t\rangle^{\f1p-\f12-\f\sigma2}t^{-\f1p}
\leq \langle t\rangle^{\f1p-\f12-\f\sigma2}t^{\f12-\f1p}
\lesssim 1.$$
As a result, we achieve
\begin{equation}\label{a14}
\|\Dh u\|_{L^1(t_3,t;\cB^{\f2p}_{p})}
+\|\nablah P\|_{L^1(t_3,t;\cB^{\f2p}_{p})}
\lesssim\alp^4 \langle t\rangle^{\f1p-\f12-\f\sigma2}t^{-\f1p}.
\end{equation}

On the other hand, by using Theorem $3.14$ of \cite{BCD} and the estimate \eqref{a2}, we have
\begin{align*}
\|a\|_{\wt{L}^\infty_t(\cB^{2-\f2p}_{2})}
\leq\|a_0\|_{\cB^{2-\f2p}_{2}}
\exp\bigl(C\|\nablah u\|_{L^1_t(\cB^1_{2})}\bigr)
\leq\|a_0\|_{\cB^{2-\f2p}_{2}}
\exp\bigl(C\|u_0\|_{\cB^0_{2}}
\exp(C\|u_0\|_{L^2}^4)\bigr),
\end{align*}
which together with \eqref{a2} guarantees that
\begin{equation}\label{a2vr}
\|a\|_{\wt{L}^\infty_t(\cB^{\th_2}_{2})}
\leq C,\quad\forall\ \th_2\in[1,{2-\f2p}].
\end{equation}
In view of the equation for $a$ in \eqref{2dua},
we deduce from \eqref{a12} and \eqref{a2vr} that
\begin{equation}\begin{split}\label{a15}
\|a_t\|_{\wt{L}^\infty(t_3,t;\cB^0_{p})}
\lesssim\|\nablah a\|_{\wt{L}^\infty(t_3,t;\cB^0_{p})}
\|u\|_{\wt{L}^\infty(t_3,t;\cB^{\f2p}_{p})}
\lesssim \alp^2\langle t\rangle^{\f1p-\f12-\f\sigma2}t^{-\f1p}.
\end{split}\end{equation}
And in view of the equation for $u$ in \eqref{2dua}, we deduce from  Lemma \ref{lemp}
and \eqref{a11} that
\begin{equation}\begin{split}\label{a16}
\|u_t(t_3)\|_{\cB^{0}_{p}}&\lesssim\|(1+a)\Dh u(t_3)\|_{\cB^{0}_{p}}
+\|(u\cdot\nablah u)(t_3)\|_{\cB^{0}_{p}}
+\|(1+a)\nablah P(t_3)\|_{\cB^{0}_{p}}\\
&\lesssim\|\Dh u(t_3)\|_{\cB^{0}_{p}}
+\|u(t_3)\|_{\cB^{\f2p}_{p}}\|\nablah u(t_3)\|_{\cB^{0}_{p}}\\
&\lesssim\alp^2 \langle t\rangle^{\f1p-\f12-\f\sigma2}t^{-1}
+\alp^4 \langle t\rangle^{\f2p-1-\sigma}t^{-\f12-\f1p}\\
&\lesssim\alp^4 \langle t\rangle^{\f1p-\f12-\f\sigma2}t^{-1}.
\end{split}\end{equation}

By substituting the estimates \eqref{a14}-\eqref{a16}
into \eqref{a13}, we conclude that
$$\|u_t\|_{\wt{L}^\infty(t_3,t;\cB^{0}_{p})}
+\|\Dh u_t\|_{L^1(t_3,t;\cB^{0}_{p})}
\leq\alp^6 \langle t\rangle^{\max\{\f1p-\f12-\f\sigma2,-\s\}}t^{-1},$$
from which and \eqref{a12}, we deuce from the equation of $u$ in \eqref{1.1} that
\begin{align*}
\|\Dh u(t)\|_{\cB^{0}_{p}}&\leq C\bigl(\|u_t(t)\|_{\cB^{0}_{p}}
+\|u(t)\|_{\cB^{\f2p}_{p}}\|\nablah u(t)\|_{\cB^{0}_{p}}\bigr)\\
&\leq C\alp^6 \langle t\rangle^{\max\{\f1p-\f12-\f\sigma2,-\s\}}t^{-1}
+\f12\|\Dh u(t)\|_{\cB^{0}_{p}}
+C\|u(t)\|_{\cB^{\f2p}_{p}}^2\|u(t)\|_{\cB^{0}_{p}}\\
&\leq C\alp^6 \langle t\rangle^{\max\{\f1p-\f12-\f\sigma2,-\s\}}t^{-1}
+\f12\|\Dh u(t)\|_{\cB^{0}_{p}},
\end{align*}
which implies
\begin{equation}\label{a18}
\|u(t)\|_{\cB^2_{p}}\lesssim
\alp^6 \langle t\rangle^{\max\{\f1p-\f12-\f\sigma2,-\s\}}t^{-1},
\quad\forall\ \s\in]0,2[.
\end{equation}

\section{The proof of Theorem \ref{thm1}}\label{sec6}

Now by interpolation between \eqref{4.1} and \eqref{a18}, we conclude
\begin{equation}\label{6.1}
\|u(t)\|_{\cB^{\th}_{p}}\lesssim\alp^6
\langle t\rangle^{\max\{\f1p-\f12-\f\sigma2,-\s\}}
t^{-\f{\th}2},\quad\forall\ p\in[2,\infty[,~\th\in [0,2],~\s\in]0,2[,
\end{equation}
which together with \eqref{3.12} completes the proof of Theorem \ref{thm1} when $\s\in]0,2[$.
\smallskip

While the case for $\s=2$ needs more effort. We first deduce from the assumption $\rho_0u_0\in \dB^{-2}_{2,\infty}\cap\cB^0_2$ that $\rho_0u_0\in\cB^{-2+\varsigma}_2$ for any $\varsigma\in]0,1[$. Then we can use \eqref{6.1} with $\s=2-\varsigma$ to get that in this case, there holds
\begin{equation}\label{6.2}
\|u(t)\|_{\cB^{\th}_2}\lesssim\atp^6
\langle t\rangle^{-1+\f\varsigma2}
t^{-\f{\th}2},\quad\forall\ \th\in [0,2].
\end{equation}
Substituting this into \eqref{3.11} gives
for any $q\in[1,2]$ that
$$\|u\|_{\wt L^q_t(\dB^{\f2q-2}_{2,\infty})}
\lesssim\atp^{12},~\text{ i.e. }~
\|\dj u\|_{L^q_t(L^2)}
\lesssim2^{j(2-\f2q)}\atp^{12},
\quad\forall\ j\in\Z,$$
which guarantees the existence of some $t_j\in]t/4,t/2[$ such that
\begin{equation}\label{6.3}
\|\dj u(t_j)\|_{L^2}
\lesssim2^{j(2-\f2q)}\atp^{12} t^{-\f1q},
\quad\forall\ j\in\Z.
\end{equation}
Then we can get by using the same procedure of deriving \eqref{eqtDj} that
\begin{equation}\label{6.4}
\|\dj u(t/2)\|_{L^2}
+\int_{t_j}^{\f t2} 2^{2j}\|\dj u(t')\|_{L^2}\,dt'
\lesssim \|\dj u(t_j)\|_{L^2}+\cA_j,
\end{equation}
where
$$\cA_j\eqdef\int_{t_j}^{\f t2}\bigl\|\dj\bigl(a\D u
-(1+a)\na P-u\cdot\nabla u\bigr)(t')\bigr\|_{L^2}\,dt'.$$
While by using Lemma \ref{lemp} and the estimates \eqref{a2vr} and \eqref{6.2}
\begin{align*}
2^{j(\f2q-2)}\cA_j&\lesssim\int_{t_j}^{\f t2}\|a\D u(t')\|_{\cB^{\f2q-2}_2}
+\|u\cdot\nabla u(t')\|_{\cB^{\f2q-2}_2}\,dt'\\
&\lesssim\int_{t_j}^{\f t2}\|a(t')\|_{\cB^{1-\varsigma}_2}
\|\D u(t')\|_{\cB^{\f2q-2+\varsigma}_2}
+\|u(t')\|_{\cB^1_2}
\|\nabla u(t')\|_{\cB^{\f2q-2}_2}\,dt'\\
&\lesssim\atp^{12}t^{-\f1q},\quad\forall\
q\in]1,2],~\varsigma\in]0,2-2/q].
\end{align*}
By substituting this estimate and \eqref{6.3} into \eqref{6.4}, we deduce
for any $q\in]1,2]$ that
$$2^{j(\f2q-2)}\|\dj u(t/2)\|_{L^2}
\lesssim \atp^{12}t^{-\f1q},\quad\forall\ j\in\Z,\quad\text{i.e.}\quad
\|u(t/2)\|_{\dB^{\f2q-2}_{2,\infty}}
\lesssim \atp^{12}t^{-\f1q}.$$
Then the interpolation inequality gives
$$\|u(t/2)\|_{\cB^{-\f14}_{2}}
\lesssim\|u(t/2)\|_{\dB^{-\f18}_{2,\infty}}^{\f12}
\|u(t/2)\|_{\dB^{-\f38}_{2,\infty}}^{\f12}
\lesssim \atp^{12}t^{-\f78}.$$
By using this and \eqref{lem4.1b} in  Lemma \ref{lem4.1}, we deduce
\begin{align*}
\|u\|_{\wt{L}^\infty(t/2,t;\cB^{-\f14}_2)}
+\|u\|_{L^1(t/2,t;\cB^{\f74}_2)}
\lesssim\|u(t/2)\|_{\cB^{-\f14}_{2}}
\exp\bigl(C\|u_0\|_{L^2}^4\bigr)
\lesssim \atp^{12}t^{-\f78}.
\end{align*}
In particular, this gives
$$\|u\|_{L^8(t/2,t;\cB^0_2)}
\leq\|u\|_{\wt{L}^\infty(t/2,t;\cB^{-\f14}_2)}^{\f78}
\|u\|_{L^1(t/2,t;\cB^{\f74}_2)}^{\f18}
\lesssim \atp^{12}t^{-\f78}.$$
Then we can use Lemma \ref{Salem3}
together with \eqref{a2} to achieve
\begin{equation}\label{6.5}
\|u(t)\|_{\cB^0_2}\lesssim \atp^{12}\w{t}^{-1}.
\end{equation}

Now starting with \eqref{6.5}, we can repeat the procedures in Sections
\ref{sec4} and \ref{sec5} to get the desired estimate
\eqref{1.7} for $\s=2$.
This completes the proof of Theorem \ref{thm1}.

\appendix

\section{The proof of Lemma \ref{lem4.1}}\label{appA}

The aim of this appendix is to present the proof of Lemma \ref{lem4.1}.
Before proceeding, let us give the following lemma
that can help us to handle the pressure term.

\begin{lem}\label{lemp}
{\sl Let $p\in]1,\infty[$ and $s_1\in ]-2/p,2/p]$.
Let $f\in \cB^{s_1}_{p}$ and
$a\in \cB^{\f2p}_{p}$ with $\|a\|_{\cB^{\f2p}_p}$ being
sufficiently small. Then the following equation
\begin{equation}\label{eqrlemp}
\dive\bigl((1+a)\nabla\Pi-f\bigr)=0.
\end{equation}
has a unique  solution $\nabla\Pi$  satisfying
\begin{equation}\label{S3eq6}
\|\nabla\Pi\|_{\cB^{s_1}_p}\lesssim\|f\|_{\cB^{s_1}_p}
\andf\|(1+a)\nabla\Pi\|_{\cB^{s_1}_p}\lesssim\|f\|_{\cB^{s_1}_p}.
\end{equation}
}\end{lem}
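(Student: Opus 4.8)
The plan is to turn the elliptic equation \eqref{eqrlemp} into a fixed-point problem for $\na\Pi$ and to solve it by a contraction argument directly in the space $\cB^{s_1}_p$. Expanding \eqref{eqrlemp} gives $\Dh\Pi=\dive f-\dive(a\na\Pi)$, so applying the zeroth-order Fourier multiplier $\na\Dh^{-1}\dive=\mathrm{Id}-\PP$ yields the equivalent relation
$$\na\Pi=\na\Dh^{-1}\dive f-\na\Dh^{-1}\dive(a\na\Pi)\eqdef\cL(\na\Pi).$$
First I would record that, since $p\in]1,\infty[$, the matrix of homogeneous-degree-zero multipliers $\na\Dh^{-1}\dive$ is bounded on every homogeneous Besov space $\cB^{s}_p$; in particular $\|\na\Dh^{-1}\dive f\|_{\cB^{s_1}_p}\lesssim\|f\|_{\cB^{s_1}_p}$, which also ensures that $\na\Pi$ genuinely lives in the homogeneous space of Definition \ref{defbesov} (its low-frequency decay being inherited from that of $f$).

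The contraction estimate is where the smallness of $a$ enters. Using once more the boundedness of $\na\Dh^{-1}\dive$ together with the borderline product law of Lemma \ref{lemproductlaw} (taking $r=1$ and $s_1\in]-2/p,2/p]$), I would bound
$$\|\cL(\na\Pi_1)-\cL(\na\Pi_2)\|_{\cB^{s_1}_p}\lesssim\|a(\na\Pi_1-\na\Pi_2)\|_{\cB^{s_1}_p}\lesssim\|a\|_{\cB^{\f2p}_p}\|\na\Pi_1-\na\Pi_2\|_{\cB^{s_1}_p}.$$
Choosing $\|a\|_{\cB^{\f2p}_p}$ so small that the implied constant times it is $\leq1/2$ makes $\cL$ a contraction on $\cB^{s_1}_p$, and the Banach fixed-point theorem provides a unique $\na\Pi\in\cB^{s_1}_p$ solving \eqref{eqrlemp}. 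Summing the resulting Neumann series, or simply absorbing the contraction term into the left-hand side, then gives the first bound $\|\na\Pi\|_{\cB^{s_1}_p}\lesssim\|\na\Dh^{-1}\dive f\|_{\cB^{s_1}_p}\lesssim\|f\|_{\cB^{s_1}_p}$.

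Finally, the second estimate follows at once from the triangle inequality and the same product law:
$$\|(1+a)\na\Pi\|_{\cB^{s_1}_p}\leq\|\na\Pi\|_{\cB^{s_1}_p}+\|a\na\Pi\|_{\cB^{s_1}_p}\lesssim\bigl(1+\|a\|_{\cB^{\f2p}_p}\bigr)\|\na\Pi\|_{\cB^{s_1}_p}\lesssim\|f\|_{\cB^{s_1}_p}.$$
I expect the only genuine subtlety to be the endpoint $s_1=2/p$ in the product estimate, where one must invoke that $\cB^{\f2p}_p$ is a Banach algebra embedded in $L^\infty$ rather than the generic paraproduct bound valid on the open interval; everything else is a routine application of the $L^p$-multiplier theorem and Lemma \ref{lemproductlaw}. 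A secondary point worth keeping in mind is that the fixed point is sought in the homogeneous space, so the uniqueness obtained is that of $\na\Pi$ (the pressure $\Pi$ itself being determined only up to an additive constant), which is exactly what the statement asserts.
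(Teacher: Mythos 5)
Your proposal is correct and follows essentially the same route as the paper: both reduce \eqref{eqrlemp} to the fixed-point relation $\na\Pi=\na\D^{-1}\dive f-\na\D^{-1}\dive(a\na\Pi)$ and use the borderline product law of Lemma \ref{lemproductlaw} together with the smallness of $\|a\|_{\cB^{2/p}_p}$ to invert $\mathrm{Id}-\cM_a$ (the paper via a Neumann series bound on $({\rm Id}-\cM_a)^{-1}$, you via the equivalent Banach contraction argument), after which the second bound in \eqref{S3eq6} follows from the same product law. Your remark about the endpoint $s_1=2/p$ is already absorbed in the paper's statement of the borderline case of Lemma \ref{lemproductlaw}, so no separate algebra argument is needed.
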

\begin{proof}
We can rewrite \eqref{eqrlemp} as
$$\D\Pi=-\dive(a\nabla\Pi)+\dive f.$$
Then  applying the operator $\nabla\D^{-1}$ to the above identity gives
\begin{equation}\label{S3eq6a} \nabla\Pi=\cM_a(\nabla\Pi)+\nabla\D^{-1}\dive f\with
\cM_a(g)\eqdef\nabla\D^{-1}\dive(ag).
\end{equation}
In view of the choice of $s_1$, we can use Lemma \ref{lemproductlaw} to get
$$\|\cM_a(g)\|_{\cB^{s_1}_p}\leq C\|a\|_{\cB^{\f2p}_{p}}
\|g\|_{\cB^{s_1}_p},\quad \mbox{i.e.}\quad
\|\cM_a\|_{\cL(\cB^{s_1}_p)}\leq C\|a\|_{\cB^{\f2p}_{p}},$$
where $\cL(\cB^{s_1}_p)$ means linear operator in $\cB^{s_1}_p$.
Thus if $\|a\|_{\cB^{\f2p}_p}$ is sufficiently small,
then $({\rm Id}-\cM_a)^{-1}$ is well-defined as an element of $\cL(\cB^{s_1}_p)$ with
$$({\rm Id}-\cM_a)^{-1}\leq\bigl(1-C\|a\|_{\cB^{\f2p}_{p}}\bigr)^{-1}.$$
As a result, we can solve a unique $\nabla\Pi$ from \eqref{S3eq6a} satisfying
$$\|\nabla\Pi\|_{\cB^{s_1}_p}
\lesssim\bigl(1-C\|a\|_{\cB^{\f2p}_{p}}\bigr)^{-1}
\|\nabla\D^{-1}\dive f\|_{\cB^{s_1}_p}
\lesssim\|f\|_{\cB^{s_1}_p}.$$
This completes the proof of this lemma.
\end{proof}

With Lemma \ref{lemp} at hand,
now we can present the proof of Lemma \ref{lem4.1}.

\begin{proof}[{\bf Proof of Lemma \ref{lem4.1}}]
For any constant $\mu>0,$ let us denote
\begin{equation}\label{lem4.1c}
w_{\mu}(t)\eqdef w(t)\exp\Bigl(-\mu\int_0^t \Upsilon(t')\,dt'\Bigr),
\quad\text{where}\quad\Upsilon(t)\eqdef\|b_1(t)\|_{\cB^{\f2p}_{p}}^2
+\|\nablah b_2(t)\|_{\cB^{\f2p}_{p}}.
\end{equation}
Multiplying $\exp\bigl(-\mu\int_0^t \Upsilon(t')\,dt'\bigr)$
and applying $\dhk$ to \eqref{eqt2dw} gives
\begin{equation}\label{eqtwmu}
\pa_t \dhk w_{\mu}+\mu \Upsilon(t)\dhk w_{\mu}-\dhk\Dh w_{\mu}
+\dhk F_\mu=0,
\end{equation}
where
$$F_\mu=b_1\cdot\nablah w_\mu
+w_\mu\cdot\nablah b_2
-a\D w_\mu+(1+a)\na\Pi_\mu-\eta_\mu.$$

Taking $L^2$ inner product of \eqref{eqtwmu}
with $|\dhk w_\mu|^{p-2}\dhk w_\mu$ yields
$$\f1p\f{d}{dt}\|\dhk w_\mu\|_{L^p}^p+\mu\Upsilon(t)\|\dhk w_\mu\|_{L^p}^p
-\int_{\R^3}\bigl(\dhk\D w_\mu-\dhk F_\mu\bigr)
\cdot|\dhk w_\mu|^{p-2}\dhk w_\mu\,dx=0.$$
It follows from  Lemma \ref{lemdan} that
$$-\int_{\R^2}\dhk \D w_\mu\cdot|\dhk w_\mu|^{p-2}\dhk w_\mu\,dx
\gtrsim2^{2j}\|\dhk w_\mu\|_{L^p}^p.$$
As a result, it comes out
$$\f1p\f{d}{dt}\|\dhk w_\mu\|_{L^p}^p
+\bigl(\mu\Upsilon(t)+2^{2j}\bigr)\|\dhk w_\mu\|_{L^p}^p
\lesssim\|\dhk F_\mu\|_{L^p}\|\dhk w_\mu\|_{L^p}^{p-1}.$$
Then for any $\e>0,$ multiplying the above inequality by
$\bigl(\|\dhk w_\mu\|_{L^p}^p+\e\bigr)^{\f1p-1}$ gives
\begin{align*}
\f{d}{dt}\bigl(\|\dhk& w_\mu\|_{L^p}^p+\e\bigr)^{\f1p}
+\bigl(\mu\Upsilon(t)+2^{2j}\bigr)
\bigl(\|\dhk w_\mu\|_{L^p}^p+\e\bigr)^{\f1p}\\
&=\bigl(\|\dhk w_\mu\|_{L^p}^p+\e\bigr)^{\f1p-1}\cdot
\Bigl(\f1p\f{d}{dt}\|\dhk w_\mu\|_{L^p}^p
+\bigl(\mu\Upsilon(t)+2^{2j}\bigr)\bigl(\|\dhk w_\mu\|_{L^p}^p+\e\bigr)\Bigr)\\
&\lesssim\bigl(\|\dhk w_\mu\|_{L^p}^p+\e\bigr)^{\f1p-1}
\cdot\Bigl(\|\dhk F_\mu\|_{L^p}\|\dhk w_\mu\|_{L^p}^{p-1}+\bigl(\mu\Upsilon(t)+2^{2j}\bigr)\e\Bigr)\\
&\lesssim\|\dhk F_\mu\|_{L^p}
+\bigl(\mu\Upsilon(t)+2^{2j}\bigr)\e^{\f1p}.
\end{align*}
Integrating the above inequality over $[0,t]$ gives
\begin{equation}\begin{split}\label{eqtDj}
\bigl(\|\dhk w_\mu(t)\|_{L^p}^p&+\e\bigr)^{\f1p}
+\int_0^t\bigl(\mu\Upsilon(t')+2^{2j}\bigr)
\bigl(\|\dhk w_\mu(t')\|_{L^p}^p+\e\bigr)^{\f1p}\,dt'\\
&\lesssim \bigl(\|\dhk w_0\|_{L^p}^p+\e\bigr)^{\f1p}
+\|\dhk F_\mu\|_{L^1_t(L^p)}
+\bigl(\mu\|\Upsilon\|_{L^1_t}+2^{2j}t\bigr)\e^{\f1p}.
\end{split}\end{equation}
Then by letting $\e\rightarrow0^+$, and multiplying the resulting inequality by $2^{js}$,
and then summing up in $j\in\Z$, we obtain
\begin{equation}\begin{split}\label{lem4.1.1}
\|w_\mu\|_{\wt{L}^\infty_t(\cB^s_{p})}
+\mu\|w_\mu\|_{L^1_{t,\Upsilon}(\cB^s_{p})}
+\|\Dh w_\mu\|_{L^1_t(\cB^s_{p})}
\leq C\|w_0\|_{\cB^s_{p}}
+C\|F_\mu\|_{L^1_{t}(\cB^s_{p})}.
\end{split}\end{equation}

For the terms in $\|F_\mu\|_{L^1_{t}(\cB^s_{p})}$,
we first get, by using Lemma \ref{lemproductlaw} that
\begin{equation}\begin{split}\label{lem4.1.2}
C\|b_1\cdot\nablah w_\mu\|_{L^1_t(\cB^s_{p})}
&\leq C\int_0^t\|b_1\|_{\cB^{\f2p}_{p}}
\|\nablah w_\mu\|_{\cB^s_{p}}\,dt'\\
&\leq \int_0^t\Bigl(\f18\|\Dh w_\mu\|_{\cB^s_{p}}
+C\|w_\mu\|_{\cB^s_{p}}\|b_1\|_{\cB^{\f2p}_{p}}^2\Bigr)\,dt'.
\end{split}\end{equation}
Similarly, we have
\begin{equation}
C\|w_\mu\cdot\nablah b_2\|_{L^1_t(\cB^s_{p})}
\leq C\int_0^t\|w_\mu\|_{\cB^s_{p}}
\|\nablah b_2\|_{\cB^{\f2p}_{p}}\,dt'.
\end{equation}

On the other hand, due to $\|a\|_{{L}^\infty_t(\cB^1_{2})}$ is sufficiently small,
we have
\begin{equation}\label{lem4.1.3}
C\|a\Dh w_\mu\|_{L^1_t(\cB^s_{p})}
\leq C\|a\|_{L^\infty_t(\cB^{1}_{2})}\|\Dh w_\mu\|_{L^1_t(\cB^s_{p})}
\leq\f18\|\Dh w_\mu\|_{L^1_t(\cB^s_{p})}.
\end{equation}
While by taking space divergence to the $w$ equation of \eqref{eqt2dw}, we find
$$\dive\bigl((1+a)\na\Pi-a\D w+b_1\cdot\nablah w
+w\cdot\nablah b_2-\eta\bigr)=0.$$
Then by applying Lemma \ref{lemp} and the estimates
\eqref{lem4.1.2}-\eqref{lem4.1.3}, we deduce
\begin{equation}\begin{split}\label{lem4.1.4}
C\bigl\|(1+a)\nablah\Pi_\mu\bigr\|_{L^1_t(\cB^s_{p})}
&\leq C\bigl(\|b_1\cdot\nablah w_\mu\|_{L^1_t(\cB^s_{p})}
+\|w_\mu\cdot\nablah b_2\|_{L^1_t(\cB^s_{p})}\\
&\qquad+\|a\Dh w_\mu\|_{L^1_t(\cB^s_{p})}
+\|\eta_\mu\|_{L^1_t(\cB^s_{p})}\bigr)\\
&\leq \f14\|\Dh w_\mu\|_{L^1_t(\cB^s_{p})}
+C\|w_\mu\|_{L^1_{t,\Upsilon}(\cB^s_{p})}
+C\|\eta_\mu\|_{L^1_t(\cB^s_{p})}.
\end{split}\end{equation}

Now by inserting the estimates \eqref{lem4.1.2}-\eqref{lem4.1.4}
into \eqref{lem4.1.1}, we finally achieve
\begin{align*}
\|w_\mu\|_{\wt{L}^\infty_t(\cB^s_{p})}
+(\mu-C)\|w_\mu\|_{L^1_{t,\Upsilon}(\cB^s_{p})}
+\|\Dh w_\mu\|_{L^1_t(\cB^s_{p})}
\lesssim\|w_0\|_{\cB^s_{p}}
+\|\eta_\mu\|_{L^1_t(\cB^s_{p})}.
\end{align*}
Taking $\mu$ to be sufficiently large gives
$$\|w_\mu\|_{\wt{L}^\infty_t(\cB^s_{p})}
+\|\Dh w_\mu\|_{L^1_t(\cB^s_{p})}
\lesssim\|w_0\|_{(\cB^s_{p})}
+\|\eta\|_{L^1_{t}(\cB^s_{p})},$$
which together with the expression \eqref{lem4.1c} ensures
the first desired estimate \eqref{lem4.1a}.
\smallskip

As for the case when $b_2=\eta=0$ and $s\in]-2/p,1/p]$,
in order to prove \eqref{lem4.1b},
we only need to modify the $\Upsilon(t)$
in \eqref{lem4.1c} to be
$\|b_1\|_{L^2}^{\f2{p-1}}
\|\nablah b_1\|_{L^2}^2,$
and the estimate \eqref{lem4.1.2} to be
\begin{align*}
C \|b_1\cdot\nablah w_\mu\|_{L^1_t(\cB^s_{p})}
&\leq C\int_0^t\|b_1\|_{(\cB^{\f1p}_{p})}
\|\nablah w_\mu\|_{(\cB^{s+\f1p}_{p})}\,dt'\\
&\leq C \int_0^t\|b_1\|_{L^2}^{\f1p}
\|\nablah b_1\|_{L^2}^{1-\f1p}
\|w_\mu\|_{\cB^{s}_p}^{\f12-\f1{2p}}
\|\Dh w_\mu\|_{\cB^{s}_p}^{\f12+\f1{2p}}\,dt'\\
&\leq \int_0^t\Bigl(\f18\|\Dh w_\mu\|_{\cB^s_{p}}
+C\|w_\mu\|_{\cB^s_{p}}\|b_1\|_{L^2}^{\f2{p-1}}
\|\nablah b_1\|_{L^2}^2\Bigr)\,dt'.
\end{align*}
This completes the proof of this lemma.
\end{proof}

\medskip

\noindent {\bf Acknowledgments.}
Y. Liu is supported by NSF of China under grant 12101053.

\end{document}